
\documentclass[11pt]{amsart}
\usepackage{verbatim}


\newtheorem{theorem}{Theorem}[section]
\newtheorem{proposition}[theorem]{Proposition}

\newtheorem{claim}[theorem]{Claim}

\newtheorem{cor}[theorem]{Corollary}
\newtheorem{question}[theorem]{Question} 
\newtheorem{definition}[theorem]{Definition}

\newtheorem{conjecture}[theorem]{Conjecture}

\theoremstyle{plain}
\numberwithin{equation}{theorem}

\theoremstyle{remark}
\newtheorem*{notation}{Notation}

\newcommand{\C}{{\mathbb C}}

\DeclareMathOperator{\nonsing}{nonsing}
\DeclareMathOperator{\Id}{Id}
\newcommand{\Cp}{\C_p}

\newcommand{\N}{{\mathbb N}}

\newcommand{\cA}{{\mathcal A}}

\newcommand{\D}{{\mathbb D}}
\newcommand{\PCp}{\bP^1(\bC_p)}

\DeclareMathOperator{\PGL}{PGL}
\newcommand{\Dbar}{\overline{\D}}

\newcommand{\add}{\oplus}
\newcommand{\minus}{\ominus}

\newcommand{\fO}{\mathfrak o}

\newcommand{\Kbar}{\overline{K}}

\DeclareMathOperator{\Spec}{Spec}

\DeclareMathOperator{\GL}{GL}

\DeclareMathOperator{\End}{End}

\DeclareMathOperator{\bN}{\mathbb{N}}

\newcommand{\bP}{{\mathbb P}}
\newcommand{\bZ}{{\mathbb Z}}
\newcommand{\bG}{{\mathbb G}}

\newcommand{\bC}{{\mathbb C}}

\newcommand{\bA}{{\mathbb A}}
\newcommand{\bQ}{{\mathbb Q}}
\newcommand{\bF}{{\mathbb F}}
\newcommand{\Fq}{\bF_q}
\newcommand{\lra}{\longrightarrow}
\newcommand{\fa}{\mathfrak a}
\newcommand{\fb}{\mathfrak b}
\newcommand{\fc}{\mathfrak c}

\newcommand{\cO}{\mathcal{O}}
\newcommand{\cF}{\mathcal{F}}

\newcommand{\cU}{\mathcal{U}}
\newcommand{\cV}{\mathcal{V}}

\newcommand{\cW}{\mathcal{W}}

\newcommand{\cP}{\mathcal{P}}

\title[The dynamical Mordell-Lang problem]{Periodic points, linearizing
  maps, and the dynamical Mordell-Lang problem}
\author{D.~Ghioca and T.~J.~Tucker}
\keywords{Mordell-Lang conjecture, dynamics}
\subjclass[2000]{Primary 14K12, Secondary 37F10}
\thanks{The second author was partially supported by NSA
    Grant 06G-067.}

\address{
Dragos Ghioca \\
Department of Mathematics \& Computer Science\\
University of Lethbridge \\
4401 University Drive \\ 
Lethbridge, Alberta T1K 3M4 
}

\email{dragos.ghioca@uleth.ca}

\address{
Thomas Tucker\\
Department of Mathematics\\
Hylan Building\\
University of Rochester\\
Rochester, NY 14627
}

\email{ttucker@math.rochester.edu}

\begin{document}

\begin{abstract}
  We prove a dynamical version of the Mordell-Lang conjecture for
  subvarieties of quasiprojective varieties $X$, endowed with the
  action of a morphism $\Phi:X\lra X$.  We use an analytic method based
  on the technique of Skolem, Mahler, and Lech, along with results of
  Herman and Yoccoz from nonarchimedean dynamics.  

\end{abstract}

\maketitle

\section{Introduction}
\label{intro}

Let $X$ be a quasiprojective variety over the complex numbers $\bC$, let $\Phi: X \lra
X$ be a morphism, and let $V$ be a closed subvariety of
$X$.  For any integer $i\geq 0$, denote by $\Phi^i$ the
$i^{\text{th}}$ iterate $\Phi\circ\cdots\circ\Phi$; for any point $\alpha\in X(\bC)$, we let $\cO_{\Phi}(\alpha):=\{\Phi^i(\alpha)\text{ : }i\in\N\}$ be the $\Phi$-orbit of $\alpha$.
If $\alpha \in X(\bC)$ has the property that there is some
integer $\ell\geq 0$ such that $\Phi^\ell(\alpha) \in W(\bC)$,
where $W$ is a periodic subvariety of $V$, then there are infinitely
many integers
$n\geq 0$ such that $\Phi^n(\alpha) \in V$.  More precisely, if
$M\geq 1$ is the period of $W$ (the smallest positive integer $j$ for which
$\Phi^j(W) = W$), then $\Phi^{kM + \ell}(\alpha) \in W(\bC)
\subseteq V(\bC)$ for integers $k\geq 0$.  It is natural
then to pose the following question.

\begin{question}\label{general}
  If there are infinitely many integers $m\geq 0$ such that
  $\Phi^m(\alpha) \in V(\bC)$, are there necessarily integers $M\geq
  1$ and $\ell\geq 0$ such that $\Phi^{kM + \ell}(\alpha) \in V(\bC)$
  for all integers $k\geq 0$?
\end{question}
Note that if $V(\bC)$ contains an infinite set of the form $\{\Phi^{kM+\ell}(\alpha)\}_{k\in\N}$ for some positive integers $M$ and $\ell$, then $V$ contains a positive dimensional subvariety invariant under $\Phi^M$ (simply take the union of the positive dimensional components of the Zariski closure of $\{\Phi^{kM+\ell}(\alpha)\}_{k\in\N}$).

Denis \cite{Denis-dynamical} appears to have been the first to pose Question~\ref{general}. He showed that the answer is ``yes'' under the
additional hypothesis that the integers $n$ for which $\Phi^n(\alpha)
\in V(\bC)$ are sufficiently dense in the set of all positive
integers; he also obtained results for automorphisms of projective
space without using this additional hypothesis.  Bell \cite{Bell}
later solved the problem completely in the case of automorphisms of
affine space, by showing that the set of all $n\in\N$ such that $\Phi^n(\alpha)\in V(\bC)$ is at most a finite union of arithmetic progressions.  More recently, results were obtained in the case when
$\Phi: \bA^2 \lra \bA^2$ takes the form $(f,g)$ for $f,g \in \bC[t]$
(\cite{Mike}) and the subvariety $V$ is a line, and in the case when
$\Phi:\bA^g \lra \bA^g$ has the form $(f,\dots,f)$ where $f \in K[t]$ (for a number field $K$)
has no periodic critical points other than the point at infinity
(\cite{Par}).

The technique used in \cite{Par} and \cite{Bell} is a modification of
a method first used by Skolem \cite{Skolem} (and later extended by
Mahler \cite{Mahler-2} and Lech \cite{Lech}) to treat linear
recurrence sequences.  The idea is
to show that there is a positive integer $M$ such that for each $i = 0,\dots, M-1$ there is
an integer $j \equiv i \pmod{M}$ and a $p$-adic analytic map
$\theta_j$ on the closed unit disc in $\bC_p$ such that $\theta_j(k) =
\Phi^{kM + j}(\alpha)$ for all $k \in \bN$.  Given any polynomial
$F$ in the vanishing ideal of $V$, one thus obtains a $p$-adic
analytic function $F \circ \theta_j$ that vanishes on all $k$ for which
$\Phi^{kM + j}(\alpha) \in V$.  Since an analytic function cannot have
infinitely many zeros in its domain of convergence unless that
function is identically zero, this implies that if there are
infinitely many $n \equiv i \pmod{M}$ such that $\Phi^n(\alpha) \in V$,
then $\Phi^{kM + j}(\alpha) \in V$ for {\it all} $k \in \N$.

In the case of \cite{Par}, the existence of the $p$-adic analytic maps
$\theta_j$ is proved by using linearizing maps developed by
Rivera-Letelier \cite{Riv}. One is able to show that the desired
analytic map exists at a prime $p$ provided that for any $\alpha_1, \dots,
\alpha_g \in K$, there exists a nonnegative integer $j$ such that $f^j(\alpha_i)$ is in a
$p$-adically indifferent periodic residue class modulo $p$, for each $i=1,\dots,g$.  It seems plausible, and even likely, that this
technique generalizes to the case of any map of the form $\Phi =
(f_1, \dots, f_g)$ where $f_i \in \bC[t]$.  Thus, we make the
following conjecture.  

\begin{conjecture}
\label{dynamical M-L}
Let $f_1,\dots,f_g\in\bC[t]$ be polynomials, let $\Phi$ be their
action coordinatewise on $\bA^g$, let $\cO_\Phi((x_1,\dots,x_g))$
denote the $\Phi$-orbit of
the point $(x_1,\dots,x_g)\in \bA^g(\bC)$, and let $V$ be a subvariety
of $\bA^g$.  Then $V$ intersects $\cO_\Phi((x_1,\dots,x_g))$ in at
most a finite union of orbits of the form
$\cO_{\Phi^M}(\Phi^{\ell}(x_1,\dots,x_g))$, for some nonnegative
integers $M$ and $\ell$.
\end{conjecture}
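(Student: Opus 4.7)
The plan is to extend the Skolem--Mahler--Lech strategy of \cite{Par} and \cite{Bell} from a single polynomial acting diagonally to the true coordinatewise setting $\Phi=(f_1,\dots,f_g)$. Since the data involves only finitely many coefficients, a standard specialization argument lets us replace $\bC$ by a number field $K$; I assume henceforth that $f_i\in K[t]$ and $(x_1,\dots,x_g)\in K^g$.

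The heart of the argument is the construction, for a suitable non-archimedean place $\fp$ of $K$ above a rational prime $p$, of $\fp$-adic analytic interpolations of the sub-orbits along arithmetic progressions. Concretely, the goal is to produce a positive integer $M$, a nonnegative integer $\ell$, and for each $j\in\{0,1,\dots,M-1\}$ a $\Cp$-analytic map
\[
\theta_j:\Z_p\lra \bA^g(\Cp),\qquad \theta_j(k)=\Phi^{kM+j+\ell}(x_1,\dots,x_g),
\]
valid for every $k\in\N$. These are to be assembled coordinatewise: for each $i$, apply the linearization results of Rivera-Letelier \cite{Riv} to $f_i$ at $\fp$. This requires that, for a suitable $\fp$ and $\ell$, each $f_i^{\ell}(x_i)$ lies in a $\fp$-adically indifferent periodic residue class with period dividing $M$; the maps $\theta_j$ then arise by composing $k\mapsto kM$ with the Rivera-Letelier linearizing coordinate around each such indifferent periodic point.

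Once the $\theta_j$ exist, the conjecture follows quickly. For any $F$ in the vanishing ideal of $V$, the composite $F\circ\theta_j$ is $\Cp$-analytic on $\Z_p$, so by the $p$-adic Weierstrass preparation theorem it either has only finitely many zeros or vanishes identically. In the first case only finitely many $k$ contribute points to $V\cap\cO_\Phi((x_1,\dots,x_g))$ from the progression $kM+j+\ell$. In the second case, running this simultaneously over a generating set of the ideal of $V$ shows $\Phi^{kM+j+\ell}(x_1,\dots,x_g)\in V(\Cp)$ for every $k\in\N$, contributing one orbit of the form $\cO_{\Phi^M}(\Phi^{j+\ell}(x_1,\dots,x_g))$ to the intersection. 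Taking the union over $j\in\{0,\dots,M-1\}$, together with the finitely many sporadic intersections from the first case, yields the claimed finite union of $\cO_{\Phi^M}$-orbits.

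The main obstacle, and the reason the statement remains conjectural, lies in the step just above: producing a single prime $\fp$ at which \emph{all} $g$ orbits $\{f_i^n(x_i)\}_{n\geq 0}$ simultaneously enter indifferent periodic residue classes. In \cite{Par} this is achieved when $f$ has no non-infinite periodic critical point, by using nonarchimedean Fatou--Julia theory to show that most primes $\fp$ force the orbit into an indifferent class. Extending this to $g$ polynomials requires a \emph{simultaneous} density statement at a common prime; more seriously, handling $f_i$ that do possess non-infinite periodic critical points (whose superattracting dynamics is fundamentally incompatible with Rivera-Letelier linearization) appears to require genuinely new input from nonarchimedean dynamics, presumably via the Herman--Yoccoz theorems flagged in the abstract, and is where I would expect the hard work to lie.
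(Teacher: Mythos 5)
This statement is a \emph{conjecture} in the paper, not a theorem: the authors explicitly present it as open, proving only special cases (Theorems~\ref{general attracting}, \ref{polydyn}, \ref{general indifferent}, and \ref{endomorphism}), so there is no proof in the paper to compare against. Your proposal is likewise not a proof, and you say so yourself: the entire argument hinges on producing a single prime $\fp$, an $\ell$, and an $M$ such that \emph{every} coordinate orbit $\{f_i^n(x_i)\}$ lands in a $\fp$-adically indifferent periodic residue class of period dividing $M$, and you leave that step unestablished. That is precisely the missing arithmetic--dynamical input: in \cite{Par} it is obtained only for a single $f$ with no finite periodic critical points, and neither the simultaneity over $g$ different polynomials at a common prime nor the superattracting case (periodic critical points, where Rivera-Letelier linearization genuinely breaks down) is handled by any known argument. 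So what you have written is the same heuristic strategy the paper itself sketches in the introduction before stating the conjecture, not a proof of it.

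Two further points in your reduction deserve correction. First, you cannot ``replace $\bC$ by a number field $K$'' by a standard specialization: the coefficients of the $f_i$ and the coordinates of the starting point may be transcendental, and specializing them can change the intersection $V\cap\cO_\Phi$. The correct standard move (used in the paper via \cite[Lemma 3.1]{Bell} and Lech) is to embed the finitely generated field of definition into $\Q_p$ or $\Cp$, which preserves the problem but does \emph{not} make the data algebraic. This matters because the linearization at an indifferent periodic point in your scheme rests on a small-divisor condition, which over number fields is supplied by Yu's theorem on linear forms in $p$-adic logarithms; for transcendental multipliers no such lower bound is available, which is exactly why the paper's unconditional results at indifferent points (Theorem~\ref{general indifferent}) are restricted to number fields, while only the attracting-homothety case (Theorem~\ref{general attracting}) works over all of $\Cp$. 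Second, even granting indifferent periodic residue classes in each coordinate, the periods and the primes must be chosen compatibly across all $g$ coordinates simultaneously; nothing in your sketch addresses why such a common choice exists. These are the genuine gaps, and they are why the statement remains a conjecture.
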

Note that the orbits for which $M=0$ are singletons, so that the
conjecture allows not only infinite forward orbits but also finitely
many extra points. We view our Conjecture~\ref{dynamical M-L} as a
dynamical version of the classical Mordell-Lang conjecture, where
subgroups of rank one are replaced by orbits under a morphism. Note
that when each $f_i$ is a monomial, Conjecture~\ref{dynamical M-L}
reduces to a dynamical formulation of the classical Mordell-Lang
conjecture for endomorphisms of the multiplicative group (see also our
Theorem~\ref{endomorphism}).

In this paper, we describe a more general framework for approaching
Conjecture~\ref{dynamical M-L}.  Note that when $\Phi$ takes the form $(f_1,
\dots, f_g)$ for $f_i \in \bC[t]$, the Jacobian of $\Phi$ is always
diagonalizable.  Here we prove more general results about neighborhoods
of fixed points with diagonalizable Jacobians.  Our principal tool is
work of Herman and Yoccoz \cite{HY} on linearizing maps for general
diffeomorphism in higher dimensions. Our first result is the following.

\begin{theorem}
\label{general attracting}
Let $p$ be a prime number, let $X$ be a quasiprojective variety defined over $\Cp$, and let $\Phi : X \lra X$ be a morphism defined over $\Cp$.
Let $\alpha \in X(\bC_p)$, and let $V$ be a closed subvariety of $X$ defined over $\Cp$. Assume
the $p$-adic closure of the orbit $\cO_{\Phi}(\alpha)$ contains a
$\Phi$-periodic point $\beta$ of period dividing $M$ such that $\beta$ and all of
its iterates are nonsingular, and such that the Jacobian of $\Phi^M$ at
$\beta$ is a nonzero homothety of $p$-adic absolute value less than one.  Then $V(\Cp)\cap\cO_{\Phi}(\alpha)$ is at most a finite union of orbits of the
form $\cO_{\Phi^k}(\Phi^{\ell}(\alpha))$, for some nonnegative
integers $k$ and $\ell$.
\end{theorem}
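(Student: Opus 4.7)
The plan is to reduce to the fixed-point setting, linearize $\Phi^M$ via Herman--Yoccoz on a $p$-adic polydisc around each iterate of $\beta$, and then run the Skolem--Mahler--Lech argument one residue class at a time.

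First I set $\beta_j := \Phi^j(\beta)$ for $j=0,\dots,M-1$. Since $D\Phi^M|_\beta = \lambda\,\Id$ is invertible, each factor $D\Phi|_{\beta_{j-1}}$ in its chain-rule decomposition is invertible; in particular every $\beta_j$ is nonsingular, and differentiating the identity $\Phi^M\circ\Phi^j=\Phi^j\circ\Phi^M$ at $\beta$ gives $D\Phi^M|_{\beta_j}=\lambda\,\Id$ as well. The hypothesis $\beta\in\overline{\cO_\Phi(\alpha)}$ produces a subsequence $\Phi^{n_i}(\alpha)\to\beta$, and after passing to a further subsequence I may assume all $n_i$ lie in a fixed residue class $r\pmod M$. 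For any residue $s\pmod M$, continuity of $\Phi^{s-r}$ gives $\Phi^{n_i+(s-r)}(\alpha)\to\beta_{s-r}$, with the indices $n_i+(s-r)$ in class $s$. Thus for every residue class some tail of the corresponding subsequence $\{\Phi^{s+kM}(\alpha)\}_{k}$ enters any prescribed neighborhood of $\beta_{s-r}$.

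Next I would invoke the Herman--Yoccoz theorem to construct, for each $s$, a $p$-adic analytic isomorphism $\theta_s$ from a polydisc neighborhood $U_s$ of $\beta_{s-r}$ onto a polydisc around $0$ in $\Cp^g$ (with $g=\dim_{\beta_{s-r}}X$) that conjugates $\Phi^M$ to scalar multiplication by $\lambda$. The usual obstructions are vacuous: the homothety $\lambda\,\Id$ has no multiplicative resonances for multi-indices $\gamma$ with $|\gamma|\geq 2$ (since $|\lambda^{|\gamma|}|_p<|\lambda|_p$), and the would-be small divisors $\lambda^{|\gamma|}-\lambda=\lambda(\lambda^{|\gamma|-1}-1)$ all have absolute value exactly $|\lambda|_p$, uniformly bounded away from zero. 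So the formal conjugacy produced by the standard iterative scheme automatically converges in a definite polydisc.

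Now fix $s$ and choose $n_0\equiv s\pmod M$ with $\Phi^{n_0}(\alpha)\in U_s$, possible by the first paragraph. Setting $w:=\theta_s(\Phi^{n_0}(\alpha))$, one has $\theta_s(\Phi^{n_0+kM}(\alpha))=\lambda^k w$ for every $k\geq 0$. For any polynomial $F$ in the local vanishing ideal of $V$ at $\beta_{s-r}$, the function $h_F(t):=F(\theta_s^{-1}(tw))$ is $p$-adic analytic on a neighborhood of $0\in\Cp$, and $h_F(\lambda^k)=F(\Phi^{n_0+kM}(\alpha))$. Since $|\lambda^k|_p\to 0$, the values $\lambda^k$ accumulate at $0$, so either only finitely many $k$ satisfy $h_F(\lambda^k)=0$ simultaneously for all $F$ in a finite set of local generators, or every such $h_F$ vanishes identically on a disc around $0$, in which case $\Phi^{n_0+kM}(\alpha)\in V$ for every sufficiently large $k$, contributing one orbit $\cO_{\Phi^M}(\Phi^{n_0+k_0 M}(\alpha))\subseteq V$. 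Summing over $s\in\{0,\dots,M-1\}$ and adjoining the finitely many early iterates in each class (those preceding entry into $U_s$) that happen to lie in $V$ as singleton orbits $\cO_{\Phi^0}(\Phi^\ell(\alpha))$ yields the claimed finite union.

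The main obstacle is the second step: verifying that the Herman--Yoccoz machinery really produces an honest $p$-adic analytic conjugacy on a quantitatively controlled polydisc in the higher-dimensional, quasiprojective setting. The homothety hypothesis is exactly what kills resonances and small divisors, so once one can quote or adapt Herman--Yoccoz in this non-archimedean context, the Skolem--Mahler--Lech portion of the argument is essentially routine.
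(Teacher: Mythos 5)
Your proposal is correct and follows essentially the same route as the paper: local analytic charts at each iterate of $\beta$, the Herman--Yoccoz linearization theorem (whose small-divisor condition (C) is trivially satisfied because $|\lambda^{i}-\lambda|_p=|\lambda|_p$ for $i\ge 2$), and then the Skolem--Mahler--Lech accumulation-of-zeros argument applied one residue class modulo $M$ at a time, followed by the orbit decomposition into finitely many $\cO_{\Phi^M}$-orbits plus singletons. The ``main obstacle'' you flag is not one: the Herman--Yoccoz result the paper quotes is precisely their non-archimedean, higher-dimensional linearization theorem, so it applies verbatim in the charts, exactly as you use it.
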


As a special case of Theorem~\ref{general attracting}, we derive
the main result of \cite{p-adic}, which we state here as
Theorem~\ref{polydyn}; this may be thought of as a special
case of Conjecture~\ref{dynamical M-L}.  Before stating
Theorem~\ref{polydyn}, we recall the definition of attracting periodic
points for rational dynamics.

\begin{definition}
\label{periodicity}
If $K$ is a field, and $\varphi\in K(t)$ is a
rational function,
then $z\in\bP^1(\Kbar)$ is a {\em periodic point} for
$\varphi$ if there exists an integer $n\geq 1$ such that
$\varphi^n(z) = z$.
The smallest such integer $n$ is the {\em period}
of $z$, and $\lambda=(\varphi^n)'(z)$ is
the {\em multiplier} of $z$.
If $|\cdot |_v$ is an absolute value on $K$,
and if $0<|\lambda|_v < 1$, then $z$ is called {\em attracting}.
\end{definition}

\begin{theorem}
\label{polydyn}
Let $g\ge 1$, let $p$ be a prime number, let
$\phi_1,\dots,\phi_g\in\Cp(t)$ be rational functions, and let
$\Phi:=(\phi_1,\dots,\phi_g)$ act coordinatewise on
$\left(\bP^1\right)^g$. Let $\alpha:=(x_1,\dots,x_g)\in
\left(\bP^1\right)^g(\Cp)$, and let $V\subset \left(\bP^1\right)^g$ be
a subvariety defined over $\Cp$. Assume the $p$-adic closure of the
orbit $\cO_{\Phi}(\alpha)$ contains an attracting $\Phi$-periodic
point $\beta:=(y_1,\dots,y_g)$ such that for some positive integer
$M$, we have $\Phi^M(\beta)=\beta$ and $(\phi_1^M)'(y_1)=\cdots =
(\phi_g^M)'(y_g)$.  Then $V(\Cp)\cap\cO_{\Phi}(\alpha)$ is at most a
finite union of orbits of the form
$\cO_{\Phi^k}(\Phi^{\ell}(\alpha))$, for some nonnegative integers $k$
and $\ell$.
\end{theorem}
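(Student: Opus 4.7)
The plan is to deduce Theorem~\ref{polydyn} as a direct specialization of Theorem~\ref{general attracting}, taking $X=(\bP^1)^g$ and $\Phi=(\phi_1,\dots,\phi_g)$ acting coordinatewise. Since $(\bP^1)^g$ is smooth, every point of $X$ — in particular $\beta$ and every one of its iterates — is automatically nonsingular, so that hypothesis comes for free. The orbit-closure and periodicity assumptions on $\beta$ are granted outright by the statement. The only substantive hypothesis left to verify is that the Jacobian of $\Phi^M$ at $\beta$ is a nonzero homothety of $p$-adic absolute value less than one.

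To check this I would work in local coordinates: for each $i$, pick a uniformizer $t_i$ at $y_i\in\bP^1(\Cp)$ (namely $t_i = t - y_i$ if $y_i\in\Cp$, or $t_i = 1/t$ if $y_i = \infty$), so that $(t_1,\dots,t_g)$ gives an analytic chart centered at $\beta$. Because $\Phi^M$ acts independently in each factor, its Jacobian matrix at $\beta$ in this chart is diagonal, with $i$-th diagonal entry equal to the multiplier $(\phi_i^M)'(y_i)$ of $y_i$ under $\phi_i^M$. The hypothesis $(\phi_1^M)'(y_1) = \cdots = (\phi_g^M)'(y_g)$ forces all diagonal entries to equal a common scalar $\lambda$, and the attracting hypothesis $0<|\lambda|_p<1$ (Definition~\ref{periodicity}) is exactly what is needed. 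Hence $D\Phi^M|_\beta = \lambda\cdot I_g$ is a nonzero homothety of absolute value less than one, as required.

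With every hypothesis of Theorem~\ref{general attracting} verified, its conclusion gives the desired description of $V(\Cp)\cap\cO_\Phi(\alpha)$ as a finite union of orbits $\cO_{\Phi^k}(\Phi^\ell(\alpha))$. I do not anticipate any real obstacle beyond the coordinate bookkeeping above: essentially all the analytic content sits in Theorem~\ref{general attracting}, and the role played by the coordinatewise structure together with the common-multiplier hypothesis is precisely to force the Jacobian to be scalar rather than merely diagonalizable. The one mildly delicate point is that the multiplier at a fixed point is well defined independently of the chosen uniformizer — but this is the standard chain-rule computation showing that a change of coordinate $s = u(t_i)$ with $u(0)=0$ multiplies the derivative at $0$ by $u'(0)\cdot u'(0)^{-1}=1$, so the intrinsic scalar $\lambda$ is unambiguous.
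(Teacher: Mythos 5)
Your proposal is correct and is exactly the paper's route: the paper derives Theorem~\ref{polydyn} as a special case of Theorem~\ref{general attracting}, and your verification (smoothness of $(\bP^1)^g$, the diagonal Jacobian in product coordinates with entries the multipliers $(\phi_i^M)'(y_i)$, and the common-multiplier plus attracting hypotheses forcing $D(\Phi^M)_\beta=\lambda\cdot\Id$ with $0<|\lambda|_p<1$) is precisely the routine bookkeeping the paper leaves implicit.
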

The following result generalizes the approach of \cite{Bell}
and \cite{Par}. Essentially, it shows that if an orbit gets close to
some ``indifferent'' periodic point of $X$, then we can answer
Question~\ref{general} in the affirmative (see \cite{Riv} for the
terminology of ``indifferent'' points in the context of rational
dynamics).
\begin{theorem}
\label{general indifferent}
Let $X$ be a quasiprojective variety defined over a number field $K$,
let $\Phi:X\lra X$ be a morphism defined over $K$, and let $V$ be a
closed subvariety of $X$ defined over $K$. Let $\beta\in X(K)$ be a
periodic point of period dividing $M$ such that $\beta$ and its
iterates are all nonsingular points, and the Jacobian of $\Phi^M$ at
$\beta$ is a diagonalizable matrix whose eigenvalues
$\lambda_1,\dots,\lambda_g$ satisfy
\begin{equation}
\label{mult indep}
\prod_{j=1}^g \lambda_j^{e_j} \ne \lambda_i, 
\end{equation}
for each $1\le i\le g$, and any nonnegative integers $e_1,\dots,e_g$
such that $\sum_{j=1}^g e_j \ge 2$.

Then for all but finitely many primes $p$, there is a $p$-adic
neighborhood $\cV_p$ of $\beta$ (depending only on $p$ and $\beta$) such that if $\cO_{\Phi}(\alpha) \cap
\cV_p$ is nonempty for $\alpha \in X(\Cp)$, then
$V(\Cp)\cap\cO_{\Phi}(\alpha)$ is at most a finite union of orbits of
the form $\cO_{\Phi^k}(\Phi^{\ell}(\alpha))$, for some nonnegative
integers $k$ and $\ell$.
\end{theorem}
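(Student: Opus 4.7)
The plan is to use Herman--Yoccoz's nonarchimedean linearization to reduce the problem, for all but finitely many primes $p$, to the classical Skolem--Mahler--Lech setup, applied residue class by residue class to the tail of $\cO_\Phi(\alpha)$ after it enters a small $p$-adic neighborhood of $\beta$.

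First I would fix an integral model of $X$, $\Phi$, and the iterates $\beta, \Phi(\beta), \dots, \Phi^{M-1}(\beta)$ over some $\cO_K[1/N_0]$, and let $S$ be the finite set of primes of $K$ dividing $N_0$ together with those at which some eigenvalue $\lambda_i$ fails to be a unit. (Condition~(\ref{mult indep}) forces every $\lambda_i$ to be nonzero and not a root of unity---e.g.\ $\lambda_i^n = 1$ with $n \ge 2$ would give $\lambda_i^{n+1} = \lambda_i$, contradicting~(\ref{mult indep})---so $S$ is finite.) For $p \notin S$, the $\lambda_i$ are $p$-adic units, the Jacobian is diagonalizable, and (\ref{mult indep}) holds, so I would invoke the $p$-adic linearization theorem of~\cite{HY} to produce a $p$-adic analytic isomorphism $\psi : \cU \to \psi(\cU) \subset X(\Cp)$ from a polydisc $\cU \subset \Cp^g$ around $0$ with $\psi(0) = \beta$ and $\psi^{-1} \circ \Phi^M \circ \psi$ equal to the diagonal linear map $D(z_1,\dots,z_g) = (\lambda_1 z_1, \dots, \lambda_g z_g)$. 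After possibly shrinking to a polydisc $\cU'$, I set $\cV_p := \psi(\cU')$; since $|\lambda_i|_p = 1$, $D$ preserves $\cU'$, so $\Phi^M(\cV_p) \subseteq \cV_p$.

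Now suppose $\cO_\Phi(\alpha)$ meets $\cV_p$, say $\Phi^{n_0}(\alpha) \in \cV_p$; then $\Phi^{n_0 + kM}(\alpha) \in \cV_p$ for every $k \ge 0$ with $\psi^{-1}(\Phi^{n_0+kM}(\alpha)) = D^k z$, where $z := \psi^{-1}(\Phi^{n_0}(\alpha))$. Next, I would choose $N \ge 1$ so that each $\lambda_i^N$ lies in $1 + \mathfrak{m}_p$ with $|\lambda_i^N - 1|_p$ small enough that the $p$-adic binomial series
\[
k \;\mapsto\; \lambda_i^{Nk} \;=\; \sum_{j \ge 0} \binom{k}{j}(\lambda_i^N - 1)^j
\]
converges on $\Z_p$. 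For each residue $r \in \{0, 1, \dots, NM-1\}$, writing $r = qM + r'$ with $0 \le r' < M$, the formula
\[
\theta_r(k) \;:=\; \Phi^{r'}\!\bigl(\psi\bigl(\lambda_1^{q+Nk} z_1, \dots, \lambda_g^{q+Nk} z_g\bigr)\bigr)
\]
defines a $p$-adic analytic map $\Z_p \to X(\Cp)$ interpolating $\Phi^{n_0 + r + kNM}(\alpha)$ at $k \in \N$. Composing with any local defining function of $V$ in an affine chart around $\Phi^{r'}(\beta)$ yields a $p$-adic analytic function on $\Z_p$, which either vanishes identically or has only finitely many zeros. Thus the set of $k \in \N$ with $\Phi^{n_0+r+kNM}(\alpha) \in V$ is either all of $\N$---contributing the orbit $\cO_{\Phi^{NM}}(\Phi^{n_0+r}(\alpha))$---or a finite set. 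Taking the union over the $NM$ residue classes and adjoining the (at most $n_0$) pre-$\cV_p$ orbit points gives the required finite union of orbits.

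The main obstacle is the linearization step: one must secure a polydisc $\cU$, and hence a neighborhood $\cV_p$, of positive radius on which the conjugating power series converges. This amounts to $p$-adic lower bounds on the small divisors $1 - \prod_j \lambda_j^{e_j}$ for multi-indices with $\sum_j e_j \ge 2$, which in turn are forced by the non-vanishing~(\ref{mult indep}) together with good integrality away from $S$---precisely the content of the $p$-adic Herman--Yoccoz theorem. Once the linearization is secured, the remainder is a direct Skolem--Mahler--Lech analytic-zero-counting argument.
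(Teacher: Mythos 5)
Your overall architecture (linearize $\Phi^M$ near $\beta$ via Herman--Yoccoz, $p$-adically interpolate the powers of the unit eigenvalues along residue classes, then run the Skolem--Mahler--Lech zero-counting argument) is the same as the paper's, but there is a genuine gap at the decisive step: securing the linearization. The Herman--Yoccoz theorem does not \emph{prove} the small-divisor lower bounds; it \emph{assumes} them, namely condition (C), which demands $|\lambda_1^{e_1}\cdots\lambda_g^{e_g}-\lambda_i|_p\ge C\bigl(\sum_j e_j\bigr)^{-b}$ uniformly for all multi-indices with $\sum_j e_j\ge 2$. Your closing claim that these bounds are ``forced by the non-vanishing~\eqref{mult indep} together with good integrality away from $S$ --- precisely the content of the $p$-adic Herman--Yoccoz theorem'' is false on both counts. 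Non-vanishing plus $p$-integrality only gives that each individual quantity is a nonzero unit-sized algebraic number; a Liouville/product-formula estimate then yields a lower bound that decays \emph{exponentially} in $\sum_j e_j$ (the height of $\prod_j\lambda_j^{e_j}-\lambda_i$ grows linearly in $\sum_j e_j$), which does not satisfy (C). The paper obtains the required polynomial lower bound from Yu's theorem on linear forms in $p$-adic logarithms (\cite[Theorem 1]{Yu}), and this is exactly why the theorem is stated over a number field: the $\lambda_i$ must be algebraic for Yu's bounds to apply. Without this Diophantine input your polydisc $\cU$ may have radius zero, and the whole construction of $\cV_p$ collapses; this missing ingredient is the heart of the proof, not a technicality.

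A secondary, fixable imprecision: for the interpolation $k\mapsto\lambda_i^{Nk}$ you need more than convergence of the binomial series on $\Z_p$ --- uniform limits of polynomials on $\Z_p$ include all continuous functions (Mahler), and mere continuity is useless for the zeros-do-not-accumulate argument. You need the function to be rigid analytic on $\Dbar(0,1)$, which requires $|\lambda_i^N-1|_p<p^{-1/(p-1)}$, not just $|\lambda_i^N-1|_p<1$; this can be arranged by taking $N$ divisible by a suitable power of $p$, which is exactly the role of the factor $2p$ in the paper's exponent $(\Lambda^d)^{2pz}$ (its Proposition on Jordan matrices). Also note that the paper must track the conjugating matrix $B$ explicitly and insists that $B$ and $B^{-1}$ be $p$-integral so that the interpolated points stay inside the polydisc where $h_j$ converges; if you absorb $B$ into your chart $\psi$, you still need this integrality to know the diagonal map preserves your polydisc $\cU'$, so it should be added to the definition of $S$.
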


There are a variety of other issues to be dealt with but this seems,
at least, to represent evidence that Question \ref{general} has a
positive answer in general. Typically, the Jacobian at a point should
have distinct, nonzero, multiplicatively independent eigenvalues.
Thus, Theorem~\ref{general indifferent} says that if some iterate of
$\alpha$ belongs to a certain $p$-adic neighborhood $\cV_p$ of a
typical periodic point, then Question~\ref{general} has a positive
answer for $\Phi$ and $\alpha$.  While many obstacles remain -- most
notably the issue of the size of the neighborhood $\cV_p$ -- we are
hopeful that this approach will lead to a general answer for
Question~\ref{general}.  We note that Fakhruddin (\cite{Fa}) has shown
that periodic points are Zariski dense for a wide class of morphisms
of varieties; moreover, his methods show that in many cases over
number fields, there is a periodic point within any periodic residue
class at a finite prime.  Thus, it is reasonable to expect that some
iterate of $\alpha$ will be $p$-adically close to some periodic point. In conclusion, we propose the following general conjecture.
\begin{conjecture}
\label{general dynamical M-L}
Let $X$ be a quasiprojective variety defined over $\C$, let $\Phi:X\lra X$ be any morphism, and let $\alpha\in X(\C)$. Then for each subvariety $V\subset X$, the intersection $V(\C)\cap\cO_{\Phi}(\alpha)$ is a union of at most finitely many orbits of the form $\cO_{\Phi^k}(\Phi^{\ell}(\alpha))$, for some nonnegative integers $k$ and $\ell$.
\end{conjecture}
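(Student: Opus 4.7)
The plan is to extend the $p$-adic analytic method used in Theorems~\ref{general attracting} and~\ref{general indifferent} to a completely general morphism. First I would reduce to a number field setting: since $X$, $\Phi$, $V$, and $\alpha$ are defined over some finitely generated subfield of $\bC$, a standard spreading-out over a finitely generated $\bZ$-algebra followed by specialization at a maximal ideal descends the problem to the case where everything is defined over a number field $K$, without affecting the set of return times $\{n\in\bN : \Phi^n(\alpha)\in V(\bC)\}$. It then suffices to show that this set is a finite union of arithmetic progressions together with finitely many extra points.

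Next I would search for a prime $p$ of good reduction and a $\Phi$-periodic point $\beta\in X(\Cp)$ lying in the $p$-adic closure of $\cO_\Phi(\alpha)$ such that the Jacobian of $\Phi^M$ at $\beta$ satisfies either the homothety hypothesis of Theorem~\ref{general attracting} or the diagonalizability-plus-multiplicative-independence hypothesis (\ref{mult indep}) of Theorem~\ref{general indifferent}. The existence of such a $\beta$ should rest on a strengthened form of the density theorem of Fakhruddin cited in the introduction, combined with a pigeonhole argument: for all but finitely many primes, $\cO_\Phi(\alpha)$ lies in a compact subset of $X(\Cp)$ and hence has a $p$-adic accumulation point, and one would want to arrange for some prime $p$ that this accumulation point is periodic with controllable multipliers. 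Given such $p$ and $\beta$, some iterate $\Phi^\ell(\alpha)$ eventually enters the linearization neighborhood of $\beta$, and the argument of Theorem~\ref{general attracting} or~\ref{general indifferent} -- producing an analytic parametrization of each residue class of $\cO_\Phi(\alpha)$ modulo $M$ and invoking the identity principle on the zeros of a $p$-adic analytic function -- applies directly and finishes the proof.

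The hardest step, as acknowledged in the paper itself, is controlling the size of the linearization neighborhood $\cV_p$ supplied by the theorem of Herman--Yoccoz: even if the orbit is known to be $p$-adically close to $\beta$, there is no a priori reason why an iterate of $\alpha$ should actually enter the disk on which $\Phi^M$ is analytically conjugate to its linear part. A second, closely related obstacle is the treatment of degenerate periodic points, meaning those whose Jacobian has a root-of-unity eigenvalue or multiplicatively dependent eigenvalues; no analytic linearization is available at such points, and one must either move to a different prime, a different $\beta$, or develop a substitute normal form, perhaps via a formal-power-series construction or a blowup in the residue fiber.

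Finally, nothing guarantees a priori that attracting or indifferent-with-independent-multipliers periodic points exist in the $p$-adic closure of $\cO_\Phi(\alpha)$ at \emph{any} chosen prime. Producing such $\beta$ in general seems to require a new input -- likely an equidistribution statement for periodic points of $\Phi$ over varying primes, combined with Hensel-type lifting to transport good multipliers across places -- and in my view this, rather than the dynamical-analytic part of the argument, is the genuinely open heart of Conjecture~\ref{general dynamical M-L}. The present theorems reduce the problem to the existence of a sufficiently well-behaved $\beta$; the challenge is to guarantee one.
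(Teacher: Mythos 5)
There is a fundamental mismatch here: the statement you were asked to prove is Conjecture~\ref{general dynamical M-L}, which the paper does not prove and explicitly leaves open. The paper establishes only special cases (Theorems~\ref{general attracting}, \ref{general indifferent}, and \ref{endomorphism}, the last being the conjecture for endomorphisms of semiabelian varieties), and its introduction candidly lists the same obstacles you do -- the size of the linearization neighborhood $\cV_p$ and the need to know that some iterate of $\alpha$ lands $p$-adically close to a periodic point with well-behaved multipliers, for which only Fakhruddin's density results are offered as heuristic evidence. Your text is therefore not a proof but a restatement of the paper's own program, and you acknowledge as much when you call the existence of a suitable $\beta$ ``the genuinely open heart'' of the conjecture. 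That admission is accurate, but it means the central steps (existence of a prime $p$ and a periodic $\beta$ in the $p$-adic closure of $\cO_\Phi(\alpha)$ satisfying the homothety or multiplicative-independence hypotheses, and entry of the orbit into the Herman--Yoccoz disk) are simply asserted as desiderata, not established; no argument you give closes them.

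One concrete technical point in the part you do sketch: the opening reduction ``spread out over a finitely generated $\bZ$-algebra and specialize at a maximal ideal to descend to a number field'' does not work as stated, because specialization can enlarge the return set $\{n : \Phi^n(\alpha)\in V\}$ -- iterates not lying on $V$ may land on the reduction of $V$ -- so a conclusion about the specialized system does not transfer back. This is precisely why the paper, in the proof of Theorem~\ref{endomorphism}, does not specialize but instead embeds the finitely generated field of definition into $\bQ_p$ (via \cite[Lemma 3.1]{Bell}, following Lech), which preserves the incidence relation exactly. Any eventual attack on the conjecture along these lines would need that kind of embedding, together with genuinely new input guaranteeing a usable periodic point; as it stands, your proposal correctly maps the terrain but leaves the conjecture as open as the paper does.
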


Finally, we prove Conjecture~\ref{general dynamical M-L} for algebraic group endomorphisms of
semiabelian varieties; in this case, one can show that the
argument from the proof of Theorem~\ref{general indifferent} always
applies.  In \cite{GT-IMRN}, this is proved using deep results of
Vojta \cite{V2} and Faltings \cite{Faltings} on integral points on
semiabelian varieties. Here, we give a purely dynamical proof.  

\begin{theorem}
\label{endomorphism}
Let $A$ be a semiabelian variety defined over a finitely generated subfield $K$ of $\bC$, and let $\Phi:A \lra A$ be an endomorphism defined over $K$. Then for every subvariety $V\subset A$ defined over $K$, and for every point $\alpha\in A(K)$, the intersection $V(K)\cap\cO_{\Phi}(\alpha)$ is at most a finite union of orbits of the form $\cO_{\Phi^k}(\Phi^{\ell}(\alpha))$ for some $k,\ell\in\N$.
\end{theorem}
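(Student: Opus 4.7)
The plan is to adapt the strategy of Theorem~\ref{general indifferent} to semiabelian varieties, using the group structure of $A$ to produce an explicit linearization directly from the formal group exponential, so that the full strength of the Herman--Yoccoz theorem is not required. Concretely, I will choose a prime $p$ at which $A$ has good reduction and the $\bar\Phi$-orbit of $\bar\alpha$ is eventually periodic, split $\cO_\Phi(\alpha)$ into finitely many sub-orbits each sitting in a single coset of the kernel of reduction, linearize via the formal logarithm to obtain a linear recurrence in $\mathrm{Lie}(A)$, and then conclude by Strassman's theorem.

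First, I would reduce to the case where $\Phi$ is an isogeny: the chain $A \supseteq \Phi(A) \supseteq \Phi^2(A) \supseteq \cdots$ of connected algebraic subgroups stabilizes at some semiabelian subvariety $B = \Phi^N(A)$, on which $\Phi$ is surjective and hence an isogeny, so one can replace $(A,\Phi,\alpha,V)$ by $(B, \Phi|_B, \Phi^N(\alpha), V \cap B)$. Since $K$ is finitely generated over $\bQ$, I would then pick a finitely generated $\bZ$-subalgebra $R \subset K$ over which $A$, $\Phi$, $V$, and $\alpha$ are all defined, with $A$ extending to a smooth semiabelian scheme over $\Spec R$. Choosing a maximal ideal $\mathfrak{m} \subset R$---which automatically has finite residue field $\Fq$---over a rational prime $p$ large enough that $p \nmid \det(d\Phi|_0)$ and that the formal logarithm and exponential converge on the kernel of reduction, and lifting $R_\mathfrak{m}$ to an embedding $R \hookrightarrow \cO_{\Cp}$, we place ourselves in the $p$-adic setting with good reduction and with $L := d\Phi|_0 \in \GL_g(\cO_{\Cp})$.

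The $\bar\Phi$-orbit of $\bar\alpha$ lies in $\bar A(\bF_{q^n})$ for some $n\ge 1$ and is therefore eventually periodic. Let $N \ge 1$ be a period (for iterates beyond some pre-period $m$), enlarged to also be a multiple of the order of $\bar L$ in $\GL_g(\bF_{q^n})$, so that $L^N \equiv \Id \pmod p$. For each $0 \le r < N$, set $\alpha_r := \Phi^{m+r}(\alpha)$ and $\Psi := \Phi^N$; then $\bar\Psi^k(\bar\alpha_r) = \bar\alpha_r$ for all $k \ge 0$, so $\gamma_k^{(r)} := \Psi^k(\alpha_r) - \alpha_r$ lies in the kernel of reduction $A_1(\Cp)$, which carries the structure of the formal group $\widehat A$. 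Because $\Psi$ is a group endomorphism, $\gamma_k^{(r)}$ satisfies $\gamma_{k+1}^{(r)} = \delta_r + \Psi(\gamma_k^{(r)})$ with $\delta_r := \Psi(\alpha_r) - \alpha_r \in A_1(\Cp)$. Applying the formal logarithm $\log: \widehat A \to \mathrm{Lie}(A)$, which is a homomorphism of formal groups and converges on $A_1(\Cp)$ for $p$ sufficiently large, this becomes the linear recurrence $u_{k+1}^{(r)} = \eta_r + L u_k^{(r)}$ with $u_0^{(r)} = 0$, where $L$ is now viewed as the differential of $\Psi$ at the origin.

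Since $L \equiv \Id \pmod p$, the map $k \mapsto L^k = \exp(k \log L)$ extends to a $p$-adic analytic function on $\bZ_p$, so the partial sum $u_k^{(r)} = \bigl(\sum_{j=0}^{k-1} L^j\bigr)\eta_r$ is analytic in $k$; composing with $\exp$ and then with the group law of $A$ yields that $\Psi^k(\alpha_r)$ is a $p$-adic analytic function of $k \in \bZ_p$ in any local coordinate system near $\alpha_r$. For each regular function $f$ cutting out $V$ locally near $\alpha_r$, the composition $F_r(k) := f(\Psi^k(\alpha_r))$ is therefore $p$-adic analytic on $\bZ_p$; by Strassman's theorem it either vanishes identically---in which case $\cO_\Psi(\alpha_r) \subset V$---or has only finitely many zeros. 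Taking the union over the finitely many residues $r$ (and adding the pre-period points) gives the result. The main technical hurdle is arranging the embedding $K \hookrightarrow \Cp$ so that $A, \Phi, V, \alpha$ all reduce well, $\bar\Phi$ remains an isogeny, and $\log, \exp$ converge on the kernel of reduction; once such a $p$ is in hand, the remainder of the argument is a formal consequence of the group law on $A$ and elementary $p$-adic analysis, which is exactly why the Herman--Yoccoz input used in Theorem~\ref{general indifferent} can be bypassed here.
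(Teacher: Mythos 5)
Your proposal is correct in outline, but it follows a genuinely different route from the paper's. The paper also begins by reducing to the case that $\Phi$ is an isogeny, but then linearizes globally: it uses the uniformization $\exp\colon\bC^g\to A(\bC)$ and the relation $\Phi\circ\exp=\exp\circ\,\varphi$, spreads everything out over a finitely generated ring, invokes the Lech/Bell embedding lemma (Claim~\ref{arithmetic}) to land in $\bZ_p$ with the eigenvalues of $L$ $p$-adic units, proves that $\cA(\bZ_p)$ is compact so that the image of the $p$-adic exponential is an open subgroup of finite index, replaces $\alpha$ by $m\alpha$ to get into that subgroup, and then pays for this replacement with an induction on $\dim V$ (since $[m]^{-1}(mV)\neq V$ in general); the interpolation of $k\mapsto\Lambda^k$ is handled via the Jordan form (Proposition~\ref{Jordan}) together with the $2p$-rescaling. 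You instead use the classical Skolem--Chabauty device: reduce modulo a prime with finite residue field, use eventual periodicity of the reduced orbit to put each sub-orbit $\{\Psi^k(\alpha_r)\}_k$ inside a single coset of the kernel of reduction, and linearize the affine recurrence $\gamma_{k+1}=\delta_r+\Psi(\gamma_k)$ via the formal logarithm, finishing with Strassman. This buys a cleaner argument: no induction on $\dim V$, no multiplication-by-$m$/finite-index step, no complex uniformization or descent of $\varphi$ to $K$, and no Jordan-form proposition (the congruence $d\Psi|_0\equiv\Id\pmod p$ replaces it). The costs are points you partly gloss over: your ``choose a maximal ideal and lift to an embedding'' step is exactly the embedding lemma the paper cites (\cite[Lemma 3.1]{Bell}), and your convergence claims need the same care the paper exercises (take $p$ odd and the embedding into $\bZ_p$ so that $\|\log(d\Psi|_0)\|\le p^{-1}$, or compose with $z\mapsto 2pz$ as the paper does); also note that $d\Psi|_0$ may have eigenvalue $1$, so the closed form $\bigl(\sum_{j<k}L^j\bigr)\eta_r=(L^k-\Id)(L-\Id)^{-1}\eta_r$ is not always available and the rigid analyticity of $k\mapsto\sum_{j<k}L^j$ on $\Dbar(0,1)$ must be checked directly (e.g.\ by coefficient estimates or a further rescaling). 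With those routine details supplied, your proof is complete and arguably simpler than the one in the paper.
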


Note that when $X$ is a semiabelian variety and $\Phi$
is a multiplication-by-$m$ map, this can likely be derived from
Faltings' proof \cite{Faltings} of the classical Mordell-Lang conjecture.  Such a
derivation seems less obvious, however, for more general
endomorphisms.

The idea of the proof of Theorems~\ref{general attracting} and
\ref{general indifferent} is fairly simple.  In both cases, we begin
by choosing an iterate $\Phi^\ell(\alpha)$ that is very close to $\beta$.  Work of Herman and Yoccoz \cite{HY} give a $p$-adic
function $h$ in a neighborhood of $\beta$ such that, for a suitable positive integer $M$, we have
$$
\Phi^M \circ h = h \circ A,$$
for some linear function $A$.  When
$A$ is a homothety, this means that iterates of $\Phi^\ell (\alpha)$
under $\Phi^M$ lie on an analytic line in $\bC_p^g$.  Composing with a polynomial in the vanishing ideal of the
subvariety $V$ gives a $p$-adic analytic function in one variable;
such a function is identically zero if it has infinitely many zeros.
Thus, for each congruence class $i = 0, \dots, M-1$ modulo $M$, either there are finitely may $n \equiv i \pmod{M}$ such that
$\Phi^n(\alpha) \in V$ or we have $\Phi^{n}(\alpha) \in V$ for all $n\ge \ell$ such that
$n\equiv i \pmod{M}$.  Under the conditions of Theorem~\ref{general
  indifferent}, it is necessary to take $p$-adic logarithms of iterates
in order to get a line in $\bC^g$ but otherwise the proof is the same.
Note that existence of the map $h$ in Theorem~\ref{general
  indifferent} depends on Yu's \cite{Yu} results on linear forms in
$p$-adic logarithms, which only apply over number fields. Under the
conditions of Theorem~\ref{general attracting}, the map $h$ exists even
when the eigenvalues of $A$ are transcendental.

We note that the Drinfeld module analog of Conjecture~\ref{dynamical
  M-L} has been proved using techniques similar to the ones employed
in this paper (see \cite{GT-compositio}).  This provided a positive
answer to a conjecture proposed by Denis \cite{Denis-conjectures} in
the case of $\Phi$-submodules $\Gamma$ of rank $1$, where
$\Phi:=(\phi_1,\dots,\phi_g)$, each $\phi_i:\Fq[t]\lra
\End_K(\bG_a)$ is a Drinfeld module, and $K$ is a function field of
transcendence degree $1$ over $\Fq$. However,
Conjecture~\ref{dynamical M-L} seems much more difficult, since the
proof \cite{GT-compositio} over Drinfeld modules makes use of the fact
that the polynomials $\phi_i(a)$ are additive (for any $a\in\Fq[t]$);
in particular, this allows us to find points of $\Gamma$ which are
arbitrarily close to $0$ with respect to any valuation $v$ of $K$ at
which the points of $\Gamma$ are integral and $\Phi$ has good
reduction. If we let $0\ne P\in\Fq[t]$ such that $|P|_v<1$, then $0$
is an attracting fixed point for the dynamical system $\Phi(P)$, and
each $\phi_i(P)$ has the same multiplier at $0$; thus the main result
of \cite{GT-compositio} is a special case of our
Theorem~\ref{polydyn}.

We now briefly sketch the plan of our paper. In
Section~\ref{attracting orbits} we prove Theorem~\ref{general
  attracting} and state a corollary of Theorem~\ref{polydyn}, while in
Section~\ref{indifferent orbits} we prove Theorems~\ref{general
  indifferent} and \ref{endomorphism}.

\begin{notation} 
  We write $\N$ for the set of nonnegative integers. If $K$ is a
  field, we write $\overline{K}$ for an algebraic closure of $K$.
  Given a prime number $p$, the field $\Cp$ will denote the completion
  of an algebraic closure $\overline{\bQ}_p$ of $\bQ_p$, the field of
  $p$-adic rationals.  We denote by $\mid\cdot\mid_p$ the usual
  absolute value on $\Cp$; that is, we have $|p|_p = 1/p$.  When we
  work in $\Cp^g$ with a fixed coordinate system, then, for
  $\vec{\alpha} = (\alpha_1\dots, \alpha_g) \in\Cp^g$ and $r>0$, we
  write $\D(\vec{\alpha},r)$ for the open disk of radius $r$ in $\Cp^g$
  centered at $\alpha$.  More precisely, we have
  $$
  \D(\vec{\alpha},r) := \{ (\beta_1, \dots, \beta_g) \in \Cp^g \;
  \mid \; \max_i |\alpha_i - \beta_i|_p < r \}.$$
  Similarly, we let
  $\Dbar(\vec{\alpha},r)$ be the {\it closed} disk of radius $r$
  centered at $\vec{\alpha}$.  In the case where $g=1$, we drop the
  vector notation and denote our discs as $\D(\alpha,r)$ and
  $\Dbar(\alpha,r)$.  We say that a function $F$ is {\it rigid analytic} on
  $\D(\alpha,r)$ (resp. $\Dbar(\alpha, r)$) if there is a power series
  $\sum_{n=0}^\infty a_n (z-\alpha)^n$, with coefficients in $\Cp$,
  convergent on all of $\D(\alpha,r)$ (resp.  $\Dbar(\alpha, r)$) such
  that $F(z) = \sum_{n=0}^\infty a_n (z-\alpha)^n$ for all $z \in
  \D(\alpha,r)$ (resp. $\Dbar(\alpha, r)$).

\end{notation}

\section{Attracting points}
\label{attracting orbits}

We will begin with a theorem of Herman and Yoccoz \cite{HY} on
linearization of analytic maps near one of their fixed points. First
we set up the notation.  Let $\vec{0}$ be the zero vector in $\Cp^g$, and for $\vec{x}:=(x_1,\dots,x_g)$ we let
\begin{equation}\label{f}
 f(\vec{x}) =  \sum_{(i_1,\dots,i_g) \in \bN^g}
b_{i_1,\dots,i_g} x_1^{i_1} \cdots x_g^{i_g}
\end{equation}
be a power series over $\Cp$ which fixes $\vec{0}$, and it has a positive radius of convergence; i.e., there is some $r > 0$ such that \eqref{f} converges on $\D(\vec{0},r)$.
Furthermore, we assume there exists $A\in\GL(g,\Cp)$ such that
$$f(\vec{x}) = A\cdot \vec{x} + \; \text{higher order terms }.$$
In
this case, $f$ is a formal diffeomorphism in the terminology of
\cite{HY}.  More generally, for a formal power series $\psi$ in
$\bC_p^g$ centered at $\vec{\alpha}$, we define $D\psi_{\vec{\alpha}}$
to be the linear part of the power series.  Thus $Df_{\vec{0}} = A$.
Note that this coincides with the usual definition of the $D$-operator
from the theory of manifolds (that is, $D \psi_{\vec{\alpha}}$ is the usual
Jacobian of $\psi$ at $\vec{\alpha}$ -- see \cite[I.1.5]{Jost}).

Let $\lambda_1, \dots, \lambda_g$ be the eigenvalues of $A$.  Suppose
that there are constants $C, b > 0$ such that
\begin{equation}\label{C}
| \lambda_1^{e_1} \cdots \lambda_g^{e_g} - \lambda_i|_p \geq C
\left( \sum_{j=1}^g e_j \right)^{- b}
\end{equation}
for any $1 \leq i \leq g$ and any tuple $(e_1, \dots, e_g) \in \bN^g$ such
that $\sum_{j=1}^g e_j \geq 2$ (this is condition (C) from page
413 of \cite{HY}). Note that \eqref{C} already implies that no $\lambda_i=0$.

The following result is Theorem $1$ of \cite{HY}.  
\begin{theorem}\label{key}
  Let $f$ and $A$ be as above.  There exists $r>0$, and there exists a bijective, $p$-adic analytic function $h:\D(\vec{0},r)\lra \D(\vec{0},r)$ such that
\begin{equation}
\label{linearization}
f ( h(\vec{x})) = h ( A\vec{x}),
\end{equation}
for all $\vec{x}\in \D(\vec{0},r)$,
where $Dh_{\vec{0}} = \Id$.  
\end{theorem}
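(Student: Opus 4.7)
The plan is the classical Poincar\'e--Siegel strategy for linearization, transposed to the $p$-adic setting. One seeks $h$ as a formal power series
\[
h(\vec{x}) = \vec{x} + \sum_{|I| \ge 2} c_I\, \vec{x}^I, \qquad c_I \in \Cp^g,
\]
determines the coefficients $c_I$ uniquely from the conjugacy equation $f \circ h = h \circ A$, and then establishes analytic convergence of $h$ on some disk $\D(\vec{0}, r)$. Condition (C) forces the eigenvalues $\lambda_1, \dots, \lambda_g$ of $A$ to be nonzero, and after a preliminary linear change of coordinates (which alters neither (C) nor the shape of the statement) we may assume $A$ is in a convenient normal form; the diagonalizable case $A = \operatorname{diag}(\lambda_1,\dots,\lambda_g)$ already contains all the essential ideas, and the general Jordan case is an entirely analogous bookkeeping exercise.

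Writing the $i$-th component of $h$ as $x_i + \sum_{|I|\ge 2} c_{I,i}\, \vec{x}^I$, expanding both sides of $f(h(\vec{x})) = h(A\vec{x})$, and matching coefficients of $\vec{x}^I$ for $|I| = n \ge 2$ yields the standard homological equation
\[
(\lambda^I - \lambda_i)\, c_{I,i} = P_{I,i}\bigl(b_J,\, c_J : |J| < n\bigr),
\]
where $\lambda^I := \lambda_1^{i_1}\cdots \lambda_g^{i_g}$ and $P_{I,i}$ is a universal polynomial in the coefficients $b_J$ of $f$ and in the previously constructed $c_J$. Condition (C) delivers $|\lambda^I - \lambda_i|_p \ge C\, n^{-b} > 0$, so each $c_{I,i}$ is uniquely forced, and the series $h$ is determined formally with $Dh_{\vec{0}} = \Id$ by construction.

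The main obstacle, as always in linearization theory, is convergence. The plan is a majorant-series induction: starting from a bound $|b_J|_p \le \rho^{-|J|}$ valid for some $\rho > 0$ (available because $f$ converges on a neighborhood of $\vec{0}$), and using (C) to control each division by $\lambda^I - \lambda_i$, one establishes by induction on $n = |I|$ an estimate of the form $|c_I|_p \le (K n^b)^{n-1} \rho^{-n}$ for a universal constant $K$ depending only on $C$, $b$, and the combinatorial size of $P_{I,i}$. The delicate combinatorial estimate bounding $P_{I,i}$ in terms of the lower-order data is where \cite{HY} does the most intricate work; once it is in hand, the polynomial inflation $n^{b(n-1)}$ is innocuous $p$-adically provided $r$ is small enough that $r^{|I|}$ dominates, so $h$ converges on some $\D(\vec{0}, r)$.

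Finally, bijectivity comes cheaply from $Dh_{\vec{0}} = \Id$ via the $p$-adic inverse function theorem: after shrinking $r$ once more, $h$ admits a convergent analytic inverse on $\D(\vec{0}, r)$ and $h(\D(\vec{0},r)) \subseteq \D(\vec{0},r)$, so that the identity $f(h(\vec{x})) = h(A\vec{x})$ holds throughout the disk as claimed.
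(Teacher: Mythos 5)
Your formal construction is fine, and it is worth noting at the outset that the paper does not prove this statement at all: it is quoted verbatim as Theorem~1 of Herman--Yoccoz \cite{HY}, so you are attempting to reprove a genuine small-divisors theorem from scratch. The recursive determination of the $c_I$ via the homological equation, the use of \eqref{C} to guarantee $\lambda^I\ne\lambda_i$, the normalization $Dh_{\vec{0}}=\Id$, and the final bijectivity argument from the ultrametric inverse function theorem are all correct in outline (in the non-diagonalizable case one must invert the full homological operator and bound the norm of its inverse, not just quote its eigenvalues, but that is indeed routine).

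The gap is in the convergence step, which is the entire content of the theorem. The bound you propose, $|c_I|_p\le (Kn^b)^{n-1}\rho^{-n}$ with $n=|I|$, is super-exponential in $n$: $(Kn^b)^{n-1}\rho^{-n}r^{n}=\exp\bigl((n-1)(\log K+b\log n)+n\log(r/\rho)\bigr)\to\infty$ for every fixed $r>0$, since $b\,n\log n$ dominates any linear term $n\log r$. So the assertion that ``the polynomial inflation $n^{b(n-1)}$ is innocuous provided $r$ is small enough'' is false, and no disk of positive radius comes out of such an estimate. Moreover, this is not a fixable slip in the exponent: the naive term-by-term use of \eqref{C} genuinely produces this growth, because the chain terms $b_{J}\,c_{I'}$ with $|J|=2$, $|I'|=n-1$ give a recursion of the shape $u_n\le C^{-1}n^{b}\rho^{-2}u_{n-1}$, hence $(n!)^b$-type accumulation and radius of convergence $0$. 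The actual content of the Herman--Yoccoz proof is precisely to show that the small divisors cannot accumulate along the recursion tree at that rate -- a Siegel/Brjuno-style counting of how often divisors of a given smallness can occur, adapted to the ultrametric setting -- yielding an exponential (not $n^{bn}$) bound on the accumulated loss. By contrast, the step you single out as the intricate part, bounding $P_{I,i}$, is essentially free $p$-adically, since the ultrametric inequality removes the combinatorial factor counting the number of terms. As written, then, the heart of the proof is missing, and the argument as proposed would not establish a positive radius of convergence.
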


Before continuing, we need to define $D \Phi$ more generally for
$\Phi$ a morphism of a quasiprojective variety.  If $X$ is a
quasiprojective variety defined over a field $L$, and $\Phi:X\lra X$ is
a morphism, and both $\beta$ and $\Phi(\beta)$ are nonsingular points
in $X(L)$, then $\Phi$ induces an $L$-linear map
$$D\Phi_{\beta}: T_\beta \lra T_{\Phi(\beta)}$$
where $T_\beta$ is the
stalk of the tangent sheaf for $X$ at $\beta$. Since $\beta$ and
$T(\beta)$ are nonsingular, both $T_\beta$ and $T_{\Phi(\beta)}$ are
vector spaces of dimension $\dim X$ over $L$ (see \cite[II.8]{H}).
Note that when $L = \bC$ and $\beta$ and $\Phi(\beta)$ are in a
coordinate patch $\cU$ on the complex manifold $X^{\nonsing}(\bC)$,
then $D \Phi_{\beta}$ can be written in coordinates using the partial
derivatives of $\Phi$ with respect to these coordinates (i.e.  as the
Jacobian matrix of $\Phi$ expressed with respect to these
coordinates).

We are ready to prove Theorem \ref{general attracting}.  

\begin{proof}
  Let $\lambda\in\Cp$ such that $D\Phi^M_{\beta}=\lambda\cdot\Id$;
  according to our hypotheses, we have $0<|\lambda|_p < 1$. Let
  $j\in\{0,\dots,M-1\}$ be fixed.  Using the fact that $X(\bC_p)$ is a
  $p$-adic analytic manifold in a neighborhood of each iterate
  $\Phi^j(\beta)$ we may find an analytic function $\cF_j$ defined on
  a sufficiently small neighborhood $\cU_j$ of $\vec{0} \in \Cp^g$
  which maps $\cU_j$ bijectively onto a neighborhood $\cV_j$ of
  $\Phi^j(\beta)$. Then we write $\Psi_j:=\cF_j^{-1}\circ\Phi^M\circ
  \cF_j$ as a function of the following form (note that
  $D(\Psi_j)_{\vec{0}} = \lambda \cdot \Id$):
  $$\Psi_j(\vec{x}):=(\cF_j^{-1}\circ\Phi^M\circ \cF_j)(\vec{x}) = \lambda \cdot \vec{x} + \; \text{higher order terms }.$$
Since $|\lambda^i - \lambda|_p =
|\lambda|_p$ for $i \geq 2$, we see that \eqref{C} is satisfied; so we
have a bijective analytic function $h_j: \D(\vec{0},r_j)\lra \D(\vec{0},r_j)$ (for some $r_j>0$) such that
$$\Psi_j \circ h_j = h_j \circ \lambda \Id$$
and $D\left(h_j\right)_{\vec{0}} = \Id$, by Theorem \ref{key}. Let $r>0$ such that for each $0\le j\le M-1$, we have
\begin{equation}
\label{one neighborhood in all}
(\Phi^j\circ \cF_0)(\D(\vec{0},r)) \subset \cF_j(\D(\vec{0},r_j)).
\end{equation}

Let $N_0$ be the smallest positive integer such that $\Phi^{N_0}(\alpha)\in \cF_0(\D(\vec{0}, r))$; then $\Phi^{N_j}(\alpha)\in \cF_j(\D(\vec{0},r_j))$, where $N_j:=N_0+j$ for each $j=1,\dots,M-1$.
Let $\vec{\alpha_j}\in \D(\vec{0},r_j)$ satisfy $h_j(\vec{\alpha_j})=\cF_j^{-1}(\Phi^{N_j}(\alpha))$.
Note
that since $|\lambda|_p < 1$, we have
\begin{equation}\label{k}
(\cF_j^{-1}\circ \Phi^{kM})\left(\Phi^{N_j}(\alpha)\right) = (\Psi_j^k\circ h_j)(\vec{\alpha_j}) = h_j(\lambda^k \cdot \vec{\alpha_j}).
\end{equation}

Now, for each polynomial $F$ in the vanishing ideal of $V$, we
construct the function $\Theta_j:\Dbar(0,1)\lra \Cp$ given by
$$\Theta_j(z):= F\left( (\cF_j \circ h_j)\left(z\cdot \vec{\alpha_j}\right)\right).$$
The function $\Theta_j$ is analytic because each $h_j$ is analytic on $\D(\vec{0},r_j)$, and $\vec{\alpha_j}\in \D(\vec{0},r_j)$.

For each $k\in\N$ such that $\Phi^{kM+N_j}(\alpha)\in V(\Cp)$, we have $\Theta_j(\lambda^k)=0$. 
Because $\lim_{k\to\infty}\lambda^k =0$, we conclude that if there are infinitely many $k$ such that
$\Phi^{N_0+j+Mk}(\alpha) \in V(\bC_p)$ then $\Theta_j$ is identically equal to $0$; hence $\Theta_j(\lambda^k) = 0$ for \emph{all} $k\in\N$, which means that
$\Phi^{N_0+j+Mk}(\alpha) \in V(\bC_p)$ for \emph{all} $k$. Thus we have
\begin{eqnarray}
\label{dichotomy}
\text{either } \Phi^{N_0+ j + Mk}(\alpha) \in V(\Cp) \text{ for all $k \in \N$}\\
\nonumber
\text{or } V(\Cp)\cap\cO_{\Phi^M}\left(\Phi^{N_0+j}(\alpha)\right)\text{ is finite.}
\end{eqnarray}

Since
$$\cO_{\Phi}(\alpha)=\{\Phi^i(\alpha)\text{ : } 0\le i\le N_0-1\}\bigcup\left(\bigcup_{j=0}^{M-1}\cO_{\Phi^M}(\Phi^{N_0+j}(\alpha))\right),$$
we conclude the proof of Theorem~\ref{general attracting}.
\end{proof}

The following result is an immediate corollary of Theorem~\ref{polydyn}.
\begin{cor}
\label{same map attracting}
Let $f\in\Cp[t]$ be defined by $f(t) = \sum_{i=1}^m a_i t^i$, where
for each $i=1,\dots,m$, we have $|a_i|_p\le 1$, while $0<|a_1|_p<1$.
We consider $\Phi$ the coordinatewise action of $f$ on $\bA^g$ (where
$g\ge 1$). Let $\alpha=(x_1,\dots,x_g)\in\bA^g(\Cp)$ satisfy
$|x_i|_p<1$ for each $i=1,\dots,g$.  Then for any subvariety
$V\subset\bA^g$ defined over $\Cp$, the intersection
$V(\Cp)\cap\cO_{\Phi}(\alpha)$ is either finite, or it contains
$\cO_{\Phi}(\Phi^{\ell}(\alpha))$ for some $\ell\in\N$.
\end{cor}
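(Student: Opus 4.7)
The plan is to reduce this to Theorem~\ref{polydyn} applied with attracting fixed point $\beta:=(0,\dots,0)$ and period $M=1$. That theorem has two hypotheses to verify: (a) $\beta$ is an attracting $\Phi$-periodic point whose coordinate multipliers are all equal, and (b) $\beta$ lies in the $p$-adic closure of $\cO_\Phi(\alpha)$. Both follow from the assumptions $|a_i|_p \le 1$, $0<|a_1|_p<1$, and $|x_i|_p<1$.

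For (a), since $f(t)=\sum_{i=1}^m a_i t^i$ has no constant term, $f(0)=0$ and so each coordinate of $\Phi$ fixes $0$; in particular $\beta$ is a $\Phi$-fixed point. The multiplier of the $i$-th coordinate at $\beta$ is $f'(0)=a_1$, so all coordinate multipliers coincide (trivially satisfying the equality in Theorem~\ref{polydyn}), and $0<|a_1|_p<1$ makes $\beta$ attracting in the sense of Definition~\ref{periodicity}. For (b), the ultrametric inequality combined with $|a_i|_p \le 1$ yields, for any $x\in\Cp$ with $|x|_p<1$,
$$|f(x)|_p \;\le\; |x|_p \cdot \max(|a_1|_p,|x|_p) \;<\; |x|_p,$$
since both factors on the right are strictly less than $1$. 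Iterating each coordinate, we get $|f^n(x_i)|_p\to 0$ (indeed, once $|f^n(x_i)|_p \le |a_1|_p$ the decay becomes geometric with ratio $|a_1|_p$). Hence $\Phi^n(\alpha)\to\beta$ in $\Cp^g$, so $\beta$ is in the $p$-adic closure of $\cO_\Phi(\alpha)$.

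Applying Theorem~\ref{polydyn} now gives that $V(\Cp)\cap\cO_\Phi(\alpha)$ is a finite union of orbits of the form $\cO_{\Phi^k}(\Phi^\ell(\alpha))$. To extract the sharper form stated in the corollary, I would invoke the dichotomy \eqref{dichotomy} established in the proof of Theorem~\ref{general attracting}: with $M=1$ there is only the single residue class $j=0$, so the dichotomy specializes to the assertion that either $V(\Cp)\cap\cO_\Phi(\alpha)$ is finite, or $\Phi^{N_0+k}(\alpha)\in V(\Cp)$ for every $k\in\N$. The latter alternative is exactly $\cO_\Phi(\Phi^{N_0}(\alpha))\subseteq V(\Cp)$, so the corollary holds with $\ell:=N_0$. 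No serious obstacle is expected: the corollary is essentially a repackaging of Theorem~\ref{polydyn} in the special case where the hypotheses hand us an explicit attracting fixed point with equal coordinate multipliers.
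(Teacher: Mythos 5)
Your proposal is correct and is exactly the route the paper intends: the paper offers no separate argument beyond calling the corollary immediate from Theorem~\ref{polydyn}, and your verification (the origin is a fixed point with common multiplier $a_1$, $0<|a_1|_p<1$, and the ultrametric estimate $|f(x)|_p\le |x|_p\max(|a_1|_p,|x|_p)<|x|_p$ forces $\Phi^n(\alpha)\to(0,\dots,0)$, so the closure of the orbit contains the attracting fixed point) is precisely what is needed, with the appeal to the dichotomy \eqref{dichotomy} at $M=1$, $j=0$ correctly supplying the sharper ``finite or contains $\cO_\Phi(\Phi^\ell(\alpha))$'' conclusion that the bare statement of Theorem~\ref{polydyn} (a finite union of $\cO_{\Phi^k}$-orbits) does not literally give. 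The only cosmetic gap is that strict decrease of $|f^n(x_i)|_p$ alone does not give convergence to $0$, but your parenthetical already fixes this: while $|f^n(x_i)|_p\ge|a_1|_p$ the decay is at least squaring, so the orbit eventually enters the region $|f^n(x_i)|_p<|a_1|_p$ where the decay is geometric.
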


Furthermore, both Theorem~\ref{general attracting} and
Theorem~\ref{polydyn} (with its Corollary~\ref{same map attracting})
can be made effective, as one can use Newton polygons to find the
zeros of $p$-adic analytic functions.  

\section{Indifferent points}
\label{indifferent orbits}

Using the same set-up as in the beginning of Section~\ref{attracting
  orbits}, we prove Theorem~\ref{general indifferent}. But we first
need to define an arbitrary power $J^z$ of a Jordan matrix, whenever
$z\in\D(0,p^{-1/(p-1)})$.
\begin{proposition}
\label{Jordan}
Let $J\in\GL(g,\Cp)$ be a Jordan matrix with the property that each
eigenvalue $\lambda_i$ of $J$ satisfies $|\lambda_i|_p=1$. Then there
exists a positive integer $d$ such that we may define $(J^d)^z$ for
each $z\in\D(0,p^{-1/(p-1)})$ satisfying the following properties:
\begin{enumerate}
\item for each $k\in\N$, the matrix $(J^d)^k$ is the usual $(dk)$-th power of $J$; and
\item the entries of $(J^d)^z$ are rigid analytic functions of $z$.
\end{enumerate}
\end{proposition}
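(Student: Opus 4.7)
The plan is to define $(J^d)^z$ via the $p$-adic matrix logarithm and exponential by the formula
$$(J^d)^z := \exp\!\bigl(z\log(J^d)\bigr),$$
after first choosing $d$ large enough that $J^d$ lies in the region where $\log$ converges and the resulting exponential converges on a disk containing both $\D(0,p^{-1/(p-1)})$ and all of $\bN$.

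The first step is to choose $d$. Since $|\lambda_i|_p=1$ for each eigenvalue, its image in the residue field $\bar{\bF}_p$ of $\Cp$ has finite order coprime to $p$; let $d_0$ be the least common multiple of those orders, so that $\lambda_i^{d_0}=1+\eta_i$ with $|\eta_i|_p<1$ for every $i$. Writing $J$ as a direct sum of Jordan blocks $\lambda_i I + N_i$ with $N_i$ nilpotent of index $s_i$, the commutative binomial theorem yields
$$(\lambda_i I+N_i)^d=\sum_{k=0}^{s_i-1}\binom{d}{k}\lambda_i^{d-k}N_i^k,$$
so the entries of $J^d-I$ consist of the diagonal terms $\lambda_i^d-1$ and the superdiagonal coefficients $\binom{d}{k}\lambda_i^{d-k}$ for $1\le k\le s_i-1$. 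Taking $d:=d_0 p^N$ with $N$ large, the diagonal entries $(1+\eta_i)^{p^N}-1$ tend $p$-adically to zero by the standard estimates for $p$-th powers of $1$-units, and Kummer's formula $v_p\!\binom{d_0 p^N}{k}\ge N-v_p(k)$ forces the superdiagonal coefficients to be small, since the indices $k\le s_i-1$ are bounded. Fixing any $s>1/(p-1)+1$, I would choose $N$ so that $|J^d-I|_p\le p^{-s}$.

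Next, set $M:=J^d-I$ and define
$$L:=\log(J^d):=\sum_{k=1}^\infty(-1)^{k+1}\frac{M^k}{k},$$
which converges entrywise because $|M|_p\le p^{-s}<1$ and satisfies $|L|_p\le|M|_p\le p^{-s}$. Then set $(J^d)^z:=\exp(zL):=\sum_{k\ge0}(zL)^k/k!$. The $p$-adic exponential series converges on arguments whose entries have $p$-adic absolute value $<p^{-1/(p-1)}$, so this series converges whenever $|z|_p<p^{s-1/(p-1)}$; by the choice of $s$, that disk of convergence contains $\D(0,p^{-1/(p-1)})$ as well as all of $\bZ$. Each entry of $(J^d)^z$ is therefore a power series in $z$ convergent on $\D(0,p^{-1/(p-1)})$, which proves (ii).

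For (i), since $L$ commutes with itself, the standard $p$-adic identities $\exp((z_1+z_2)L)=\exp(z_1 L)\exp(z_2 L)$ and $\exp(\log X)=X$ (both valid in our convergence range) give $\exp(kL)=\exp(L)^k$ for $k\in\bN$, and $\exp(L)=\exp(\log(J^d))=J^d$. Hence at $z=k\in\bN$ the series $\exp(zL)$ evaluates to $J^{dk}$, as required. The main obstacle in this outline is the choice of $d$: one must make the scalar eigenvalue powers approach $1$ and simultaneously tame the binomial coefficients coming from the nilpotent parts of each Jordan block, which is exactly what the $d=d_0 p^N$ construction accomplishes. After that, the proposition reduces to standard $p$-adic exp/log calculus.
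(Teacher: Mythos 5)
Your proof is correct, but it takes a genuinely different route from the paper's. The paper argues block by block: it chooses $d$ only so that $|\lambda_i^d-1|_p<1$, and defines $(J_0^d)^z$ on a Jordan block explicitly as the scalar $(\lambda^d)^z=\exp_p\bigl(z\log_p(\lambda^d)\bigr)$ times a unipotent matrix whose entries are binomial-type polynomials in $z$, so that analyticity only has to be checked for the scalar factor. You instead force the whole matrix into the range of the matrix logarithm: taking $d=d_0p^N$ with $N$ large makes both the eigenvalue parts $(1+\eta_i)^{p^N}-1$ and the binomial coefficients $\binom{d}{k}$ coming from the nilpotent parts small, so that $J^d-\Id$ has norm below $p^{-1/(p-1)}$ and $(J^d)^z:=\exp\bigl(z\log(J^d)\bigr)$ converges entrywise on a disk of radius $p^{s-1/(p-1)}>1$. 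What the paper's construction buys is an explicit formula and a smaller $d$ (no auxiliary $p$-power factor); what yours buys is uniformity: everything reduces to standard exp/log calculus in the nonarchimedean algebra of matrices (you implicitly use that the sup-norm on entries is submultiplicative, which is fine), property (i) falls out mechanically from $\exp(\log X)=X$ and the addition formula for commuting arguments, and you get analyticity on all of $\Dbar(0,1)$, not merely on $\D(0,p^{-1/(p-1)})$. In fact your stronger choice of $d$ quietly sidesteps two delicate points in the explicit approach: when $|\lambda^d-1|_p$ is close to $1$, $|\log_p(\lambda^d)|_p$ can exceed $1$ and $\exp_p\bigl(z\log_p(\lambda^d)\bigr)$ need not converge on all of $\D(0,p^{-1/(p-1)})$, so one is in any case led to insert a $p$-power into $d$ exactly as you do; and the unipotent factor must interpolate the coefficients $\binom{dk}{j}$ (with the $d$) at $z=k$, a bookkeeping issue your definition avoids entirely. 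Since the proposition only asserts the existence of some $d$, and $d$ enters the applications only by refining the arithmetic progressions, your larger $d$ costs nothing.
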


\begin{proof}
We let $d$ be a positive integer such that $|\lambda_i^d-1|_p<1$ for each $i=1,\dots,g$. 
We define the power $(J^d)^z$ for each $z\in\D(0,p^{-1/(p-1)})$ on each Jordan block.
For each diagonal Jordan block, we define $(J^d)^z$ as the diagonal matrix whose eigenvalues are the corresponding $(\lambda_i^d)^z$ for each $z\in\D(0,p^{-1/(p-1)})$.

Assume now that $J_0$ is a Jordan block of dimension $m\le g$ corresponding to an eigenvalue $\lambda$. Then for each $z\in\D(0,p^{-1/(p-1)})$, we define $(J_0^d)^z$ be the following matrix
\[(\lambda^d)^z\cdot \left(\begin{array}{ccccc}
1 & \frac{z}{1!\cdot\lambda} & \frac{z(z-1)}{2! \cdot\lambda^2}  & \dots & \frac{z(z-1)\cdots (z-m+1)}{m!\cdot \lambda^{m-1}}\\
0 & 1 &  \frac{z}{1!\cdot\lambda} & \dots & \frac{z(z-1)\cdots (z-m+2)}{(m-1)!\cdot \lambda^{m-2}}\\
0 & 0 & 1 & \dots & \frac{z(z-1)\cdots (z-m+3)}{(m-2)!\cdot \lambda^{m-3}}\\
\dots & \dots & \dots & \dots & \dots \\
0 & 0 & 0 & \dots & 1
\end{array}\right).
\]
It is immediate to check that $(J_0^d)^z$ equals the usual power
$J_0^{dk}$ whenever $k\in\N$. Furthermore, since $|\lambda^d-1|_p<1$,
we see that $(\lambda^d)^z = \exp_p ( (\log_p(\lambda^d))z)$ (where
$\exp_p$ and $\log_p$ are the usual $p$-adic exponential and logarithm
functions) is a rigid analytic function of $z\in\D(0,p^{-1/(p-1)})$
(see \cite[Section 5.4.1]{Robert}).  Hence each entry of $(J_0^d)^z$
is a rigid analytic function of $z\in\D(0,p^{-1/(p-1)})$ of as well.
\end{proof}

\begin{proof}[Proof of Theorem~\ref{general indifferent}.]
  Let $B\in\GL(g,\Kbar)$ such that $B (D(\Phi^M)_{\beta}) B^{-1}$ is a diagonal
  matrix $\Lambda$; at the expense of replacing $K$ by a finite extension, we may assume $B\in\GL(g,K)$. Then for all but finitely many primes $p$, the
  entries of both $B$ and $B^{-1}$ have $p$-adic absolute values at
  most equal to $1$. Let $\lambda_i$ (for $1\le i\le g$) be the
  eigenvalues of $D(\Phi^M)_{\beta}$. According to our hypotheses, each
  $\lambda_i$ is nonzero. Thus for all but finitely many primes $p$,
  each $\lambda_i$ is a $p$-adic unit.
  
  Fix a prime $p$ satisfying the above conditions, and we fix an
  embedding of $K$ into $\Cp$. Let $j\in\{0,\dots,M-1\}$ be fixed.
  Clearly, we have $D(\Phi^M)_{\Phi^j(\beta)}=D(\Phi^M)_{\beta}$.
  Since each $\Phi^j(\beta)$ is a nonsingular point, there exists a
  sufficiently small neighborhood $\cU_j\subset \Cp^g$ of $\vec{0}$,
  and an analytic function $\cF_j$ that maps $\cU_j$ bijectively onto
  a small neighborhood of $\Phi^j(\beta)\in X(\Cp)$; let
  $\Psi_j:=\cF_j^{-1}\circ \Phi^M \circ \cF_j$. Then
$$\Psi_j(\vec{x})=(B^{-1}\Lambda B)\cdot \vec{x} + \; \text{higher order terms }.$$
Using hypothesis \eqref{mult indep}
  and \cite[Theorem $1$]{Yu}, we conclude that \eqref{C} is satisfied
  by the eigenvalues $\lambda_i$. Using Theorem~\ref{key}, we
  conclude that there exists a positive number $r_j>0$ such that $\D(\vec{0},r_j)\subset\cU_j$, and there exists
  a bijective analytic function $h_j:\D(\vec{0},r_j)\lra \D(\vec{0},r_j)$ such
  that $\Psi_j\circ h_j = h_j\circ (B^{-1}\Lambda B)$.

Let $r$ be a positive number such that for every $j=0,\dots,M-1$ we have
$$\Phi^j(\cF_0(\D(\vec{0},r))) \subset \cF_j(\D(\vec{0},r_j)).$$
We
let $\cV_p:=\cF_0(\D(\vec{0},r))$ be the corresponding $p$-adic
neighborhood of $\beta$ in $X(\Cp)$. Suppose that $\cO_{\Phi}(\alpha)
\cap \cV_p$ is nonempty.  Then there exists $N_0\in\N$ such that
$\Phi^{N_0}(\alpha)\in \cV_p$, and so, $\Phi^{N_j}(\alpha)\in
\cF_j(\D(\vec{0},r_j))$ where $N_j:=N_0+j$, for each
$j=0,\dots,M-1$. Let $\vec{\alpha_j}\in \D(\vec{0},r_j)$ such that
$h_j(\vec{\alpha_j})=\cF_j^{-1}(\Phi^{N_j}(\alpha))$. Then for each
$k\in\N$, we have
$$\Phi^{kM+N_j}(\alpha)=(\cF_j\circ
h_j)\left(B^{-1}\Lambda^{k}B(\vec{\alpha_j})\right).$$
Note that
$B^{-1}\Lambda^{k}B(\vec{\alpha_j})$ is in $\D(\vec{0},r_j)$ for each
$k\in\N$, since each entry of $B$, $B^{-1}$, and $\Lambda$ is in
$\Dbar(0,1)$,

Let $d$ be a positive integer as in the conclusion of
Proposition~\ref{Jordan}. Then the entries of the matrix
$(\Lambda^d)^z$ are $p$-adic analytic functions of $z$ in the disk
$\D(0,p^{-1/(p-1)})$.  Hence, the entries of the matrix
$(\Lambda^d)^{2pz}$ are $p$-adic analytic functions of
$z\in\Dbar(0,1)$ (since $|2pz|_p < p^{-1/(p-1)}$ for $|z|_p \leq
1$).  Therefore, for each fixed $\ell=0,\dots,2pd-1$, the entries of the
matrix $\Lambda^{\ell}\cdot (\Lambda^d)^{2pz}$ are $p$-adic analytic
functions of $z\in\Dbar(0,1)$.
  
  Let $F$ be any polynomial in the vanishing ideal of $V$. Then, for
  each $j=0,\dots,M-1$ and for each $\ell=0,\dots,2pd-1$, the function
  $$\Theta_{j,\ell}:\Dbar(0,1)\lra \Cp$$
  defined by
$$\Theta_{j,\ell}(z) = F\left((\cF_j\circ h_j)\left( B^{-1}
    \left(\Lambda^{\ell}\cdot (\Lambda^d)^{2pz}\right)
    B(\alpha_j)\right)\right)$$
is rigid analytic. Furthermore, for
each $k\in\N$ such that 
$$\Phi^{N_0+j+M(2kpd+\ell)}(\alpha)\in V(\Cp)$$
we obtain that $\Theta_{j,\ell}(k)=0$. Because the zeros of a nonzero
rigid analytic function cannot accumulate (see \cite[Section
$6.2.1$]{Robert}), we conclude that
\begin{eqnarray}
\label{dichotomy 2}
\text{either } \Phi^{N_0+j+M(2kpd+\ell)} (\alpha) \in V(\Cp) \text{ for all $k \in \N$}\\
\nonumber
\text{or }
V(\Cp)\cap\cO_{\Phi^{2Mpd}}\left(\Phi^{N_0+j+M\ell}(\alpha)\right) \text{ is finite.}
\end{eqnarray}
Since \eqref{dichotomy 2} holds for each $j=0,\dots,M-1$ and for
each $\ell=0,\dots,2pd-1$, this concludes the proof of
Theorem~\ref{general indifferent}.
\end{proof}

We prove now Theorem~\ref{endomorphism}. 

\begin{proof}[Proof of Theorem~\ref{endomorphism}.]
  For each $n\ge 0$, we let $A_n:=\Phi^n(A)$ (where $A_0=A$). Clearly,
  $A_{n+1}\subset A_n$ for each $n\in\N$. Also, each $A_n$ is
  connected because it is the image of a connected group through a
  morphism; hence, each $A_n$ is a semiabelian variety itself. On the
  other hand, there is no infinite descending chain of semiabelian
  varieties; therefore there exists $N\in\N$ such that $A_n=A_N$ for
  each $n\ge N$, and so, $\Phi$ restricted to $A_N$ is an isogeny.
  Clearly, it suffices to prove Theorem~\ref{endomorphism} after
  replacing $\alpha$ by $\Phi^{N}(\alpha)$. Thus, at the expense of
  replacing $A$ by $A_N$, and $V$ by $V\cap A_N$, we may assume that
  $\Phi$ is an isogeny.
  
  We will proceed by induction on the dimension of $V$.  The case
  $\dim V = 0$ is trivial.  Using the inductive hypothesis, we may
  show that to prove our result for orbits of a point $\alpha$ it suffices
  to prove it for orbits of a multiple $m \alpha$ of $\alpha$.
  Indeed, fix a positive integer $m$ and suppose that Theorem~\ref{endomorphism} is
  true for orbits of $m \alpha$. Then, given any subvariety $V$, we
  know that the set of $n$ for which $\Phi^n(m \alpha) \in mV$ forms a
  finite union $\cP$ of arithmetic progressions. Thus, if we let $W$ equal
  the inverse image of $mV$ under the multiplication-by-$m$ map we
  know that the set of $n$ such that $\Phi^n(\alpha) \in W$ forms the
  same finite union $\cP$ of arithmetic progressions, since $\Phi$ is a
  group endomorphism. We let $Z_1,\dots,Z_s$ be the positive dimensional irreducible components of the Zariski closure of $\{\Phi^n(\alpha)\}_{n\in\cP}$ (if $\{\Phi^n(\alpha)\}_{n\in\cP}$ is finite, then also $V(K)\cap\cO_{\Phi}(\alpha)$ is finite, and so, we are done). Hence each $Z_i$ is a $\Phi$-periodic subvariety, and all but finitely many of the $\Phi^n(\alpha)$ for $n\in\cP$ are contained in one of the $Z_i(K)$.
Thus, we need only show that for each $i$, the set of $n$ such that $\Phi^n(\alpha) \in V
  \cap Z_i$ forms a finite union of arithmetic progressions.  Each
  $Z_i$ is contained in one of irreducible components of $W$ and thus
  $\dim Z_i \leq \dim V$.  If $Z_i$ is contained in $V$, then the set
  of $n$ for which $\Phi^n(\alpha) \in Z_i = Z_i \cap V$ is a
  finite union of arithmetic progressions, since $Z_i$ is
  $\Phi$-periodic.  If $Z_i$ is not contained in $V$, then $Z_i \cap
  V$ has dimension less than $\dim Z_i \leq \dim V$ and the set of $n$
  such that $\varphi^n(\alpha) \in Z_i \cap V$ is a finite union of
  arithmetic progressions by the inductive hypothesis.

  Let $\exp:\bC^g\lra A(\bC)$ be the usual exponential map associated
  to a semiabelian variety (where $g:=\dim A$). Then there exists a
  linear map $\varphi:\bC^g\lra \bC^g$ such that for each $z\in\bC^g$,
  we have
\begin{equation}
\label{uniformization endomorphism}
\Phi(\exp(z)) = \exp(\varphi(z)).
\end{equation}
At the expense of replacing $K$ by another finitely generated field,
we may assume $\varphi$ is defined over $K$. Let $L$ be the
corresponding nonsingular matrix such that $\varphi(z) = Lz$ (the
matrix is nonsingular because $\Phi$ is an isogeny).  We choose an
embedding over $\bC$ of $\iota:A \lra \bP^N$ as an open subset of a projective
variety (for some positive integer $N$). At the expense of enlarging $K$, we may assume the above embedding $\iota$ is defined over $K$. We write $\iota(A) = Z(\fa)\setminus Z(\fb)$ for homogeneous ideals
$\fa$ and $\fb$ in $K[x_0, \dots, x_N]$, where $Z(\fc)$ denotes the Zariski closed subset of $\bP^N$ on which the ideal $\fc$ vanishes.  We choose
generators $F_1, \dots, F_m$ and $G_1, \dots, G_n$ for $\fa$ and $\fb$
respectively.  We let $\add: A \times A \lra A$ denote the addition map
and $\minus: A \lra A$ the inversion map, written with respect to our
chosen coordinates on $\bP^N$.  

\begin{claim}
\label{arithmetic}
There exists a prime number $p$, and an embedding of $K$ into $\bQ_p$ such that 
\begin{enumerate}
\item[(i)] there exists a model $\cA$ of $A$ over $\bZ_p$.
\item[(ii)] $\alpha \in \cA(\bZ_p)$.
\item[(iii)] $L$ is conjugate over $\mathbb{Z}_p$ to its Jordan canonical form $\Lambda$, and
moreover each of its eigenvalues $\lambda_i$ is a $p$-adic unit.
\item[(iv)] the maps $\Phi$ and $\minus$ extend as endomorphisms of
  the $\bZ_p$-scheme $\cA$, while $\add$ extends to a morphism between
  $\cA\times\cA$ and $\cA$.
\end{enumerate}
\end{claim}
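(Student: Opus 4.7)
The plan is a standard ``spread-out'' argument: collect all relevant algebraic data in a finitely generated $\bZ$-subalgebra of an auxiliary extension, localize to invert finitely many specific nonzero elements, and then specialize at a well-chosen prime $p$. First I would pass to a finite Galois extension $K'=K(\theta)$ of $K$ that contains the eigenvalues $\lambda_1,\dots,\lambda_g$ of $L$ together with the entries of a matrix $B\in\GL(g,K')$ for which $BLB^{-1}=\Lambda$ is in Jordan canonical form; write $m(t)\in K[t]$ for the minimal polynomial of $\theta$. Next I would assemble a finitely generated $\bZ$-subalgebra $R'\subset K'$ with $\Frac(R')=K'$ that contains the coordinates of $\alpha$, the coefficients of the chosen generators $F_1,\dots,F_m$ of $\fa$ and $G_1,\dots,G_n$ of $\fb$, the coefficients of the polynomials defining $\Phi$, $\add$, and $\minus$ on affine charts covering $\iota(A)$, the entries of $L,\Lambda,B,B^{-1}$, and the inverses of $\det L$, $\det B$, $\operatorname{disc}(m)$, and each $\lambda_i$ (all nonzero because $\Phi$ is an isogeny).

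Then I would invoke the standard fact that for such an $R'$ there are infinitely many primes $p$ admitting a ring homomorphism $R'\to\bZ_p$ extending to a field embedding $K'\hookrightarrow\bQ_p$ (which by restriction supplies the desired $K\hookrightarrow\bQ_p$). This is obtained by Noether-normalizing: choose a subring $\bZ[t_1,\dots,t_d][1/N]\subset R'$ with $d=\trdeg_{\bQ}K'$ over which $R'$ is finite, specialize $(t_1,\dots,t_d)\mapsto(a_1,\dots,a_d)\in\bZ_p^d$ generically, and lift a simple root of the residual minimal polynomial of a primitive element of $K'$ over $\bQ(t_1,\dots,t_d)$ via Hensel's lemma, valid as soon as $p\nmid N$ and the specialized discriminant is a $p$-adic unit. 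The finitely many bad primes to exclude are precisely those encoded in the localization data already built into $R'$.

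With such a $p$ and embedding fixed, items (i)--(iv) follow immediately. For (i), the defining ideals of $\iota(A)$ have coefficients in $R'\cap K\subset\bZ_p$, so the homogeneous equations $F_i$ together with the complement equations $G_j$ cut out a $\bZ_p$-model $\cA\subset\bP^N_{\bZ_p}$. Items (ii) and (iv) follow because the coordinates of $\alpha$ and the coefficients of the polynomials representing $\Phi$, $\add$, and $\minus$ all lie in $\bZ_p$, so these morphisms extend to the corresponding $\bZ_p$-schemes. For (iii), $B,B^{-1}\in\GL(g,\bZ_p)$ satisfy $BLB^{-1}=\Lambda$ over $\bZ_p$, and each $\lambda_i$ is a $p$-adic unit because its inverse lies in $R'\subset\bZ_p$.

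The main obstacle is the simultaneous $p$-adic realization of $K$ and its enlargement $K'$, together with the $p$-adic integrality of $B$, $B^{-1}$, and each $\lambda_i^{\pm 1}$; the Noether normalization plus Hensel lifting step is the one substantive input. The rest is bookkeeping, but some care is needed to ensure that the list of elements inverted in $R'$ is complete, so that no ``bad'' specialization can spoil any of (i)--(iv).
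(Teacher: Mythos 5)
Your overall strategy is the same as the paper's: spread all the defining data out over a finitely generated $\bZ$-algebra, then embed that ring into $\bZ_p$ for a suitable prime via a Lech/Cassels-type embedding lemma (the paper quotes Bell's Lemma 3.1 for exactly this step, which is what your Noether-normalization-plus-Hensel sketch reproves), and your handling of (iii) via including $B$, $B^{-1}$ and $\lambda_i^{\pm 1}$ in the ring is fine. The genuine gap is your assertion that (ii) and (iv) ``follow immediately'' because the coordinates of $\alpha$ and the coefficients of the polynomials representing $\Phi$, $\add$, $\minus$ lie in $\bZ_p$. Integrality of the defining data does not make these maps extend to morphisms of the $\bZ_p$-scheme $\cA$: the polynomial tuples defining $\Phi$ are only known to have no common zeros (and to take values inside $\iota(A)$, off $Z(\fb)$) on the generic fiber; on the special fiber they can acquire a nonempty indeterminacy locus or send points into $Z(\bar{\fb})$. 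Likewise $\alpha\in\cA(\bZ_p)$ is not implied by $\alpha$ having $\bZ_p$-coordinates: one needs the reduction of $\alpha$ modulo $p$ to avoid $Z(\bar{\fb})$, i.e.\ some $G_j(\alpha)$ must be a $p$-adic unit.

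This is precisely where the paper does its real work. It notes that the locus where the data for $\Phi$ degenerates (common zeros of the $F_j$'s and of the polynomials defining $\Phi$, inside the locus where some $G_i\neq 0$) is closed and disjoint from the generic fiber, hence lies over a proper closed subset $E_1$ of $\Spec R$; similarly $E_2$ for $\add$ and $\minus$, and $E_3$ for the primes over which the closure of $\alpha$ meets $Z(\fb)$. It then localizes at an element $f$ avoiding $E_1\cup E_2\cup E_3$ \emph{before} invoking the embedding lemma, so that every prime produced by that lemma automatically satisfies (i), (ii), (iv). Your list of inverted elements ($\det L$, $\det B$, $\operatorname{disc}(m)$, the $\lambda_i$) addresses only (iii); none of it excludes the finitely many primes where the morphisms or the point $\alpha$ fail to extend. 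Your closing remark that ``care is needed to ensure the list of inverted elements is complete'' points at the problem but does not supply the needed constructibility/spreading-out argument, and without it the prime you select can indeed violate (ii) and (iv).
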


\begin{proof}[Proof of Claim~\ref{arithmetic}.]
Let $R$ be a finitely generated subring of $K$ such that 
\begin{enumerate}
\item[(1)] the coefficients of $F_1,\dots, F_m$, $G_1,\dots,G_n$, and
  of the polynomials defining $\Phi$, $\add$, and $\minus$ are all
  contained in $R$;
\item[(2)] $\alpha\in\bP^N(R)$;
\item[(3)] $L$ is conjugate over $R$ to its Jordan canonical form
  $\Lambda$ and all of the eigenvalues of $\Lambda$ are in $R$.
\end{enumerate}

Let $\mathcal{D}$ be the Zariski dense open subset of
$\bP^N_{\Spec{R}}$ defined by $G_i\ne 0$ for at least one
$i\in\{1,\dots,n\}$. Then let $\mathcal{B}$ be the closed subset of
$\mathcal{D}$ defined by the zeros of each $F_j$ (for
$j\in\{1,\dots,m\}$) and of the polynomials defining $\Phi$. Because
$\Phi$ is an endomorphism of $A$, we conclude that $\mathcal{B}$ does
not intersect the generic fiber of $\mathcal{D}\lra \Spec(R)$.
Therefore $\mathcal{B}$ is contained in a finite union of special
fibers of $\bP^N_{\Spec(R)}\lra\Spec(R)$. Let $E_1$ be the proper
closed subset of $\Spec R$ corresponding to these special fibers.
Then, for any open affine subset $\Spec R'$ of $\Spec R \setminus E_1$, there exists a model $\cA_{R'}$ of $A$ over $\Spec(R')$, and moreover, $\Phi$ extends to a $R'$-morphism from $\cA_{R'}$ to itself. By the
same reasoning, there is a proper closed subset $E_2$ such that for any
open affine $\Spec R'$ of $\Spec R \setminus E_2$, the maps $\add$ and $\minus$ extend to morphisms
$$\add:\cA_{R'}\times\cA_{R'}\lra\cA_{R'}\text{
  and }\minus:\cA_{R'}\lra\cA_{R'}.$$ Similarly, the Zariski closure
of the point $\alpha$ in $\bP^n(R)$ only meets $Z(\fb)$ over primes
contained in some closed subset $E_3$.  Taking an element $f\in R$ outside
the prime ideals contained in $E_1 \cup E_2 \cup E_3$ and localizing $R$ at
$f$, we obtain an open affine $\Spec R_f$ of $\Spec R$ such that
$\Phi$, $\add$, and $\minus$ extend to maps of $R_f$-schemes and such
that $\alpha$ extends to an $R_f$-point on $\cA_{R_f}$.

Since $R_f$ is finitely generated as a ring, we may write $R_f =
\bZ[u_1, \dots, u_e]$ for some nonzero elements $u_i$.
By \cite[Lemma $3.1$]{Bell} (see also \cite{Lech}), there is a prime
$p$ such that the field of fractions of $R$ embeds into $\bQ_p$ in
such a way that all of the $u_i$, all of the eigenvalues of $\Lambda$,
and all of the reciprocals of the eigenvalues of $\Lambda$ are sent to
elements of $\bZ_p$.  Note that such a map gives an embedding of $R_f$
into $\bZ_p$.  Extending the base of $\cA_{R_f}$, $\Phi$, $\add$, and
$\minus$ from $R_f$ to $\bZ_p$ via this embedding yields the model and the
maps with the desired properties.
\end{proof}

Let $p$ be a prime number for which the conclusion of Claim~\ref{arithmetic} holds.
Therefore, we have a model $\cA$ of $A$ over $\bZ_p$ such that $\alpha\in\cA(\bZ_p)$. Furthermore, both $Z(\fa)$ and $Z(\fb)$ have models $\cV$ and $\cW$ over $\bZ_p$, and so,
$$\cA(\bZ_p)=\cV(\bZ_p) \cap \left(\bP^N\setminus \cW\right)(\bZ_p)$$ 
is compact because it is the intersection of two compact subsets of $\bP^N(\bZ_p)$. Indeed, $\cV(\bZ_p)$ is compact because it is a closed subset of the compact set $\bP^N(\bZ_p)$. On the other hand, $\left(\bP^N\setminus \cW\right)(\bZ_p)$ consists of finitely many
residue classes of $\bP^N(\bZ_p)$ and thus, it is compact because $\bZ_p$ is compact. The above finitely many residue classes correspond to points in $\left(\bP^N\setminus Z(\bar{\fb})\right)(\mathbb{F}_p)$, where $\bar{\fb}$ is the ideal of $\mathbb{F}_p[x_0,\dots,x_N]$ generated by the reductions modulo $p$ of each $G_i$.

Let $d$ be a positive integer as in the conclusion of
Proposition~\ref{Jordan} corresponding to $\Lambda$. Let
$(L^d)^z=B^{-1}(\Lambda^d)^zB$, where $L=B^{-1}\Lambda B$. Using
Proposition~\ref{Jordan}, we conclude that $z\mapsto (L^d)^z$ is an
analytic function whenever $z\in\D(0,p^{-1/(p-1)})$ (with values in
$\GL(g,\Cp)$).  Therefore, for each fixed vector $\vec{x_0}\in\Cp^g$,
\begin{equation}
\label{is analytic}
\text{the function }z\mapsto (L^d)^z\cdot \vec{x_0}\text{ is rigid analytic whenever $z\in\D(0,p^{-1/(p-1)})$.}
\end{equation}

Let $\D(\vec{0},r)$ be a sufficiently small neighborhood of the origin
in $\Cp^g$ such that $\exp$ is bijective $p$-adic analytic on
$\D(\vec{0},r)$ (the power series for the exponential map over $\Cp$
has positive radius of convergence by \cite[III.7.2]{NB}).  After
replacing $\alpha$ by $m \cdot \alpha$, for a positive integer $m$ (as
we may, by the remarks at the beginning of the proof), we may assume
$\alpha\in\exp(\D(\vec{0},r))$.  To see this, we note that
$\D(\vec{0},r)$ is an additive subgroup because $|\cdot|_p$ is
nonarchimedean, so its image in $A(\bQ_p)$ is an open subgroup since
$\exp$ is bijective and analytic on $D(\vec{0},r)$.  The fact that
$\cA(\bZ_p)$ is compact means that any open subgroup of $\cA(\bZ_p)$ has
finitely many cosets in $\cA(\bZ_p)$.  Because $\alpha\in\cA(\bZ_p)$ there is a positive integer $m$ such that $m  \alpha
\in \exp(\D(\vec{0},r))$.

Let $\vec{\beta}\in\D(\vec{0},r)$ such that
$\exp(\vec{\beta})=\alpha$.  Since the coefficients of $L$ are all
$p$-adic integers, it follows that $L^n \vec{\beta}$ is in
$\D(\vec{0},r)$ for any $n\in\N$. Let
$\vec{\beta_j}:=L^j(\vec{\beta})$, for each $j=0,\dots,2pd-1$.

The remainder of the argument now proceeds as in the proof of
Theorem~\ref{general indifferent}.  Fix $j\in\{0,\dots,2pd-1\}$. For
each $k\in\N$, we have
$$\Phi^{j+2pkd}(\alpha)=\exp\left( (L^{d})^{2pk} \cdot
  \vec{\beta_j}\right),$$
For each polynomial $F$ in the vanishing
ideal of $V$, we define the function $\Theta_j$ on
$\Dbar(0,1)$ by
$$\Theta_j(z) = F\left(\exp\left( (L^{d})^{2pz} \cdot
    \vec{\beta_j}\right)\right).$$
Using \eqref{is analytic} (along with the fact that $|2p z|_p <
p^{-1/(p-1)}$ for any $|z|_p \leq 1$), we
conclude that $\Theta_j$ is rigid analytic, and so, assuming that
there are infinitely many $k\in\N$ such that $\Phi^{j+2pkd}(\alpha)\in
V(\Cp)$, we see that $\Theta_j$ is identically equal to $0$. We
conclude that
\begin{eqnarray}
\label{dichotomy 3}
\text{either } \Phi^{ j + 2pkd}(\alpha) \in V(\Cp) \text{ for all $k \in \N$}\\
\nonumber
\text{or } V(\Cp)\cap\cO_{\Phi^{2pd}}(\Phi^{j}(\alpha))\text{ is finite.}
\end{eqnarray}
This concludes the proof of Theorem~\ref{endomorphism}.
\end{proof}

As mentioned in Section~\ref{attracting orbits}, it should be possible
to make effective Theorems~\ref{general indifferent} and
\ref{endomorphism}.  In particular, this should allow for the kinds of
explicit computations that Flynn-Wetherell \cite{FW}, Bruin
\cite{Bruin}, and others performed in the context of rational points
on curves of genus greater than one.  It may also be possible to
extend the proof of Theorem~\ref{endomorphism} to the case of any
finite self-map of a nonsingular variety with trivial canonical
bundle.


\def\cprime{$'$} \def\cprime{$'$} \def\cprime{$'$} \def\cprime{$'$}
\providecommand{\bysame}{\leavevmode\hbox to3em{\hrulefill}\thinspace}
\providecommand{\MR}{\relax\ifhmode\unskip\space\fi MR }
\providecommand{\MRhref}[2]{%
  \href{http://www.ams.org/mathscinet-getitem?mr=#1}{#2}
}
\providecommand{\href}[2]{#2}

\end{document}